\newtheorem{lemma}{Lemma}
\newtheorem{proposition}{Proposition}
\newtheorem{corollary}{Corollary}
\newtheorem*{statement}{Statement}
\newtheorem{rem}{Remark}
\def\sign{\rm{sign}}
\def\leq{\leqslant}
\def\geq{\geqslant}
\date{}
\begin{document}

\title{Remarks on asymptotic independence
\footnotemark[0]\footnotetext[0]{%
\textit{MSC 2000 subject classification}. Primary 60F99, 60G07, secondary
60B10,\\ 60B99 .} \footnotemark[0]\footnotetext[0]{ \textit{Key words
and phrases}. Asymptotic independence, weak dependence.} 
\footnotemark[0]\footnotetext[0]{ \textit{Corresponding
author:} Youri Davydov,  e-mail: youri.davydov@univ-lille.fr }}

\author{ Youri Davydov$^{\text{\small 1}}$  and   Svyatoslav Novikov
$^{\text{\small 2}}$ \\
{\small $^{\text{1}}$ St. Petersburg State University}\\
{\small and Universit\'e de Lille, Laboratoire Paul
Painlev\'e}\\
 {\small $^{\text{2}}$ Chebyshev Laboratory, St. Petersburg State University  
 }}

\maketitle

\begin{abstract}
    In this paper we introduce several natural definitions  of asymptotic independence  of two sequences  of random elements. We discuss their basic properties, some simple connections between them and connections with
properties of weak dependence. In particular, the case of tight sequences  is considered in detail. Finally, in order to clarify the relationships between different definitions, we provide some counter\-examples.
\end{abstract}

\section{Introduction}
Questions related to asymptotic independence (AI) appear in different problems of probability theory and its applications. Intuitively, AI corresponds to the vanishing of "dependency", and since this "dependency" can be characterized in a number of ways, there is also a large specter of possibilities for the definition of AI. A few of them, that appear of most interest to us, are considered in the present work.

Evidently, the study of asymptotic independence adjoins the well-developed theory of weak dependence. There is a large literature on basic properties of weak dependence conditions. See, e.g. the survey
\cite{brad} and references therein. One can also mention the books
\cite{ibr}, \cite{doukh}, \cite{rio}, \cite{bul}.

Note that the main efforts of works on weak dependence are focused on the study of conditions ensuring the asymptotic independence of the past $(\sigma-{\mathrm {algebra}}\;\; \sigma\{X_k, k\leq 0\})$ and the future
$(\sigma-{\mathrm {algebra}}\;\; \sigma\{X_k, k\geq n\},\;$\\$ n\rightarrow\infty)$
of a given process,
with subsequent applications to limit theorems. 

In contrast, we are concerned with the asymptotic
independence of individual values $X_n$ and $Y_n$ and we are interested in conditions expressed in terms of the proximity of their joint distribution to the product of the marginal ones.

The question of the asymptotic relationship between $X_n$ and $Y_n$ occurs in a variety of problems.
Here is a typical example.

In the fundamental work \cite{b-b} was considered a random graph (the so-called Radial Spanning Tree) 
associated with the configuration of a homogeneous Poisson point process in $\mathbb{R}^2$.
Among other things, it was proved that the asymptotic directions of semi-infinite branches of this graph completely fill the circle.
To test the hypothesis about the uniformity of the distribution of asymptotic directions, it is natural to consider random variables $X_n$ equal to the number of semi-infinite branches intersecting 
the arc $n\alpha,$  where $\alpha \subset S^1$ and $n\rightarrow \infty$.
It turns out that for disjoint arcs $\alpha_1,\;\alpha_2$
 the corresponding variables $X_n^{(1)},\; X_n^{(2)}$ will be asymptotically independent in the sense of condition {\bf{AI-4}}, defined below, and this fact plays an important role in the subsequent analysis.

The paper consists of six sections. The first section is the introduction. Section 2 includes some basic implications between introduced conditions {\bf{AI-0}} - {\bf{AI-4}}. In Section 3, we provide some sufficient conditions for {{AI}} to hold, paying special attention to the case when $P_{X_n}$ and $P_{Y_n}$ are tight. In Section 4, we discuss the stability of {\bf{AI-0}} - {\bf{AI-4}} under transformations. Section 5 contains important counterexamples which clarify the relationships between {\bf{AI-0}} - {\bf{AI-4}}. Finally, in Section 6, we present some open questions related to asymptotic independence. 

\subsection{Conditions of AI}

Let $(X_n)$ and $(Y_n)$ be two sequences of random elements of measurable spaces $(E_1,\mathcal{E}_1)$ and $(E_2,\mathcal{E}_2)$ defined on the probability space
 $(\Omega,{\cal F},\mathbb{P}).$ We consider the pair $(X_n,Y_n)$ as an element of $(E_1 \times E_2,\; \mathcal{E}_1 \times \mathcal{E}_2)$.

It is reasonable to define asymptotic independence (AI) as the merging of the distributions $P_{(X_n,Y_n)}$ and $P_{X_n}\times P_{Y_n}$.

 Let us  consider first the case when $E_1$ and $E_2$ are Polish (that is, complete separable metric) spaces with metrics $d_1$ and $d_2$ respectively. Suppose that $\mathcal{E}_1$ and $\mathcal{E}_2$ are Borel $\sigma$-algebras of $E_1$ and $E_2$. Consider the space $E_1 \times E_2$ endowed with the product topology. We can suppose that it is generated by one of the metrics $d((x_1,y_1),(x_2,y_2))=d_1(x_1,x_2)+d_2(y_1,y_2)$ or $r((x_1,y_1),(x_2,y_2))=\max \{d_1(x_1,x_2),d_2(y_1,y_2)\}$. These metrics are equivalent: $r \leq d \leq 2r$.
 
In this case we can use the notions of weak convergence of measures and merging of measures. We say that two sequences $(\mu_n),\;(\nu_n)$ of probability measures defined on the same probability space
are {\bf merging} if 
$$
\pi(\mu_n, \;\nu_n) \;\rightarrow 0,\;\;\;n\rightarrow \infty,
$$
where $\pi$ is the \textit{L\'{e}vy-Prokhorov metric}
\begin{equation}
\pi (\mu,\nu)=\inf \{\varepsilon :\mu(A^{\varepsilon })\leq \nu(A)+\varepsilon \text{
\thinspace \thinspace for all closed sets }A\}.  \label{i3}
\end{equation}
(One can restrict themselves to only one inequality, without switching $\mu$ and $\nu$;
see, e.g  \cite{dudley}, Theorem 11.3.1.)

 Now considering asymptotic independence,  it is natural to introduce the following condition: $$(P_{(X_n,Y_n)})\; merges \; with \; (P_{X_n}\times P_{Y_n}) \; when \; n \to +\infty.$$ Due to \cite{D-R}, Theorem 1, it is equivalent to the following:

{\bf AI-1:} {\it For all bounded {\bf uniformly continuous} functions\\
 $h:E_1 \times E_2 \to \mathbb{R}, $ 
 $$
 \int h(x,y)P_{(X_n,Y_n)}(dx,dy)-\int h(x,y)(P_{X_n}\times P_{Y_n})(dx,dy) \to 0,
 $$ when $n \to +\infty$.}

We can also suggest a weaker condition:

{\bf AI-0:} {\it For all bounded {\bf uniformly continuous} functions\\
 $f:E_1 \to \mathbb{R},\;\,g:E_2 \to \mathbb{R},$ 
 $$
 \mathbb{E}f(X_n)g(Y_n)-\mathbb{E}f(X_n)\mathbb{E}g(Y_n) \to 0,
 $$ 
 when $n \to +\infty$.}
 
It is clear that \textbf{AI-1} implies \textbf{AI-0}.
\vspace{7pt}

In the general case several additional definitions can be suggested:
\vspace{5pt}

{\bf AI-2:}  For all
 $A \in \mathcal{E}_1$, $B \in \mathcal{E}_2,$  
 $$
 |P_{(X_n,Y_n)}(A \times B)-P_{X_n}(A)P_{Y_n}(B)| \to 0,\;\; n \to +\infty.
 $$

{\bf AI-3:}
$
\sup_{A \in \mathcal{E}_1,B\in \mathcal{E}_2}|P_{(X_n,Y_n)}(A \times B)-P_{X_n}(A)P_{Y_n}(B)| \to 0,\;\;\;n \to +\infty.
$
\vspace{10pt}

{\bf AI-4:}
$
\;\;\;\;||P_{(X_n,Y_n)}-P_{X_n}\times P_{Y_n}||_{var} \to 0,\;\;\;n \to +\infty.
$
\vspace{5pt}
 
Here $|| \cdot ||_{var}$ is the total variation norm.
\vspace{5pt}
 
Obviously, \textbf{AI-4}$\Rightarrow$\textbf{AI-3}$\Rightarrow$\textbf{AI-2}.
\vspace{7pt}

\vspace{7pt}

In theory, it would be possible to follow a more general approach by considering the AI of sequences of $\sigma$-algebras. For instance, here are the analogs of \textbf{AI-3} and \textbf{AI-4}.


 Let   
 $({\cal M}_n),\; ({\cal L}_n)$ be two sequences of sub-$\sigma$-algebras
 of the main probability space. Consider two conditions:
\vspace{5pt}
   
 I. (Analog of \textbf{AI-3}).
 $$
  \alpha({\cal M}_n, {\cal L}_n):=
  \sup_{{A \in {\cal M}_n, B \in {\cal L}_n}}
  |\mathbb{P}(A \cap B)-\mathbb{P}(A)\mathbb{P}(B)| \to 0, \;\; n \to +\infty.
  $$
  Note that in a similar way  the  $\alpha$-mixing coefficient is defined for any sub-$\sigma$-algebras of ${\cal F}$ as was proposed by M. Rosenblatt (\cite{ros}). 
\vspace{5pt}
  
  II. (Analog of \textbf{AI-4})
 $$
 \beta({\cal M}_n, {\cal L}_n):=
 \sup \frac{1}{2}\sum_{i=1}^I\sum_{j=1}^J
 |\mathbb{P}(A_i \cap B_j)-\mathbb{P}(A_i)\mathbb{P}(B_j)|
 \to 0, \;\; n \to +\infty,
 $$ 
   where the supremum is taken over all pairs of (finite) partitions 
   $\{A_1,\ldots ,A_I\}$
and $\{B_1,\ldots ,B_J\}$ of $\Omega$
 such that $A_i\in {\cal M}_n $ for each $i$ and 
 $B_j \in {\cal L}_n$ for each $j.$

   However, since in all cases known to us, $\sigma$-algebras are generated by concrete random elements, we only consider the case of AI of random elements.
\vspace{7pt}

There are many relations between conditions of \textbf{AI} and properties of weak dependence; we will now give a few examples.

Let $Z =(\xi_k)_{k\in \mathbb{Z}}$ be a strictly stationary sequence. 

{\bf a)} Let $Z_n$ be the shifted sequence: $(Z_n)_k=\xi_{n+k}$. Consider $Z$, $Z_n$ as random elements of the space $(E,\mathcal{E})$, where $E = \mathbb{R}^{\mathbb{Z}}$, $\mathcal{E}$ is the $\sigma$-algebra generated by cylindrical subsets of $E$. The distribution of $Z$ (denoted as $\mu$) is invariant with respect to the Bernoulli shift $T$ on $E$: $(T(\{a_n\}))_k=a_{k+1}$. 

The \textbf{mixing} condition (in the sense of ergodic theory) means that 
for all $A,B \in \mathcal{E}\:$
$$
    \mu(T^{-n}A \cap B)\to \mu(A)\mu(B),
$$
 when $n \to +\infty$. It is not difficult to see that it is equivalent to
 the following:
$$
     \mathbb{P}\{Z \in T^{-n}(A),\; Z \in B\} \to \mathbb{P}\{Z \in A\}\mathbb{P}\{Z \in B\},
$$
  that is, 
  $$
  \mathbb{P}\{Z_n \in A,Z \in B\} \to \mathbb{P}\{Z \in A\}\mathbb{P}\{Z \in B\}.
  $$ 
  Taking $X_n=Z_n$, $Y_n=Z$, we get 
  $$
  \mathbb{P}\{X_n \in A, Y_n \in B\}-\mathbb{P}\{X_n \in A\}\mathbb{P}\{Y_n \in B\} \to 0,
  $$
   when $n \to +\infty$ (as $Z$ is stationary, we have  
   $\mathbb{P}\{Z_n \in A\}=\mathbb{P}\{Z \in A\}$). This way we can see that \textbf{mixing} corresponds to \textbf{AI-2}.
\vspace{5pt}
  
{\bf b)} Let $X_n:\Omega \to \mathbb{R}^\mathbb{-\mathbb{N}}$ be a restriction of $Z$ to $\{...,-1,0\}$ and \\$Y_n:\Omega \to \mathbb{R}^{\mathbb{N}},$ be a restriction of $Z$ to $\{n,n + 1,...\}.$ Let $\mathcal{M}^b_a=\sigma\{\xi_a,...,\xi_b\}$.
 
 The \textbf{strong mixing} (or $\alpha$-mixing) condition introduced by Rosenblatt means that: 
\begin{equation}\label{alpha}
 \sup_{{A \in \mathcal{M}_{-\infty}^{0},\; B \in \mathcal{M}_n^{\infty}}} 
 |\mathbb{P}(A \cap B)-\mathbb{P}(A)\mathbb{P}(B)| \to 0,
 \end{equation}
  when $n \to +\infty$. 
  Let $\mathcal{E}_1$ be the $\sigma$-algebra generated by cylindrical subsets of $\mathbb{R}^{-\mathbb{N}}$ and let $\mathcal{E}_2$ be the $\sigma$-algebra generated by cylindrical subsets of $\mathbb{R}^{\mathbb{N}}$. Then (\ref{alpha}) is equivalent to 
  $$
  \sup_{{A \in \mathcal{E}_1,\; B \in \mathcal{E}_2}} 
  |P_{(X_n,\,Y_n)}(A \times B)-P_{X_n}(A)\times P_{Y_n}(B)| \to 0,\: n\to +\infty. 
  $$
  Hence, the \textbf{strong mixing} for stationary sequences coincides
 with  
  \textbf{AI-3}.
\vspace{5pt}
   
{\bf c)} Preserving the notation introduced in the first part of b)\\
  ($X_n=(...,\xi_{-1},\xi_0);\;Y_n=(\xi_n,\xi_{n+1},...)$), we get that the condition of 
  \textbf{complete regularity} introduced by Kolmogorov (so-called $\beta$-\textbf{mixing})
  $$ 
  ||P_{(X_n,Y_n)}-P_{X_n}\times P_{Y_n}||_{var} \to 0,\;\; n \to +\infty,
  $$
   coincides with \textbf{AI-4}.
\vspace{5pt}
   
 Because it is well known (see \cite{bradley}, 2.1, p. 112) that \textbf{mixing} does not imply \textbf{strong mixing} and the latter does not imply $\beta$-\textbf{mixing}, the same holds for \textbf{AI-2}, \textbf{AI-3}, \textbf{AI-4}.
\vspace{5pt}

\vspace{5pt}
   
\section{Connections between  AI-0 - AI-4}

Obviously, the condition {\bf{AI-4}} implies {\bf{AI-1}}.


\begin{proposition}
The condition  {\bf{AI-2}} implies {\bf{AI-0}}.
\end{proposition}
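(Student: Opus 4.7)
The plan is to reduce to the case of indicator functions (which is exactly the content of \textbf{AI-2}), extend by linearity to simple functions, and finally pass to bounded measurable functions via uniform approximation by simple functions. In particular, we will establish the conclusion for arbitrary bounded measurable $f,g$, so the uniform continuity hypothesis in \textbf{AI-0} is not actually needed for this implication.

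First, I would rewrite \textbf{AI-2} as
$$
\mathbb{E}[\mathbf{1}_A(X_n)\mathbf{1}_B(Y_n)] - \mathbb{E}[\mathbf{1}_A(X_n)]\,\mathbb{E}[\mathbf{1}_B(Y_n)] \to 0
$$
for every $A\in\mathcal{E}_1$ and $B\in\mathcal{E}_2$. By bilinearity, for every pair of simple functions $f=\sum_{i=1}^I a_i\mathbf{1}_{A_i}$ and $g=\sum_{j=1}^J b_j\mathbf{1}_{B_j}$ with $A_i\in\mathcal{E}_1$, $B_j\in\mathcal{E}_2$,
$$
\mathbb{E}[f(X_n)g(Y_n)] - \mathbb{E}[f(X_n)]\,\mathbb{E}[g(Y_n)] = \sum_{i,j} a_i b_j\bigl(\mathbb{E}[\mathbf{1}_{A_i}(X_n)\mathbf{1}_{B_j}(Y_n)]-\mathbb{E}[\mathbf{1}_{A_i}(X_n)]\mathbb{E}[\mathbf{1}_{B_j}(Y_n)]\bigr),
$$
a finite sum each of whose terms tends to $0$; hence so does the sum.

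Next, given bounded measurable $f$ and $g$ with $\|f\|_\infty,\|g\|_\infty\le M$ and $\varepsilon>0$, I would partition the range $[-M,M]$ into finitely many pieces and build simple functions $f_\varepsilon,g_\varepsilon$ with $\|f-f_\varepsilon\|_\infty<\varepsilon$ and $\|g-g_\varepsilon\|_\infty<\varepsilon$. A routine triangle-inequality estimate, using $\mathbb{E}|f(X_n)g(Y_n)-f_\varepsilon(X_n)g_\varepsilon(Y_n)|\le \varepsilon M+(M+\varepsilon)\varepsilon$ and an identical bound for the product of expectations, gives
$$
\bigl|\mathbb{E}[f(X_n)g(Y_n)]-\mathbb{E}[f(X_n)]\mathbb{E}[g(Y_n)]\bigr|\le \bigl|\mathbb{E}[f_\varepsilon(X_n)g_\varepsilon(Y_n)]-\mathbb{E}[f_\varepsilon(X_n)]\mathbb{E}[g_\varepsilon(Y_n)]\bigr|+C M\varepsilon
$$
for an absolute constant $C$. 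Applying the simple-function case to $f_\varepsilon,g_\varepsilon$, the first term on the right tends to $0$ as $n\to\infty$, so the left-hand side is eventually bounded by $(C M+1)\varepsilon$. Since $\varepsilon$ is arbitrary, the conclusion \textbf{AI-0} follows.

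There is no real obstacle here; the only point worth flagging is that the argument uses nothing more than the bilinearity of the expression $\mathbb{E}[f(X_n)g(Y_n)]-\mathbb{E}[f(X_n)]\mathbb{E}[g(Y_n)]$ and the uniform approximation of bounded measurable functions by simple ones, and so delivers a slightly stronger statement than \textbf{AI-0} as formulated in the paper.
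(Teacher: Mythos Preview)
Your argument is correct and follows exactly the same route as the paper's (very terse) proof: interpret \textbf{AI-2} as \textbf{AI-0} for indicators, extend by linearity to simple functions, then uniformly approximate bounded functions by simple ones. Your additional remark that uniform continuity of $f,g$ is not actually used is accurate and worth keeping.
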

\begin{proof} The condition {\bf{AI-2}} means that {\bf{AI-0}} holds for indicator functions $f$ and $g.$ We finish the proof approximating
uniformly two given  bounded uniformly continuous functions.
\end{proof}
Later we will show that \textbf{AI-3} does not imply \textbf{AI-1}. In particular, \textbf{AI-0} does not imply \textbf{AI-1}.

Condition \textbf{AI-3} is equivalent  to the following formally weaker condition: 
$$ 
\sup_{A \in \mathcal{A}_1,\;B\in \mathcal{A}_2}|P_{(X_n,Y_n)}(A \times B)-P_{X_n}(A)P_{Y_n}(B)| \to 0,\; n \to +\infty, 
$$
 where $\mathcal{A}_1$ and $\mathcal{A}_2$ are some algebras which generate $\sigma$-algebras $\mathcal{E}_1$ and $\mathcal{E}_2$.

Let's mention another useful fact.

Consider the set 
$$
\Pi=\{\nu\;|\; \nu=\mu_1\times \mu_2,\;\;\mu_1,\mu_2\;
\mathrm {are\; probability\; measures\; on} \; \mathcal{E}_1\;\mathrm {and}\;\;\mathcal{E}_2\}.
$$
and denote $\pi(\mu,\Pi)$ the $\pi$-distance between measure $\mu$ and the set $\Pi,$
$$
\pi(\mu,\Pi) = \inf_{\nu\in \Pi}\{\pi(\mu,\nu)\}.
$$
\begin{proposition}
 The condition  {\bf{AI-1}} is equivalent to 
 \begin{equation}\label{lem2'''}
 \pi(P_{(X_n,Y_n)},\Pi)\to 0,\;\;\;n\to+\infty.
 \end{equation}
\end{proposition}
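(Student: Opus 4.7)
The plan is to use the characterization of AI-1 already announced just before the statement: by Theorem 1 of \cite{D-R}, AI-1 is equivalent to the merging $\pi(P_{(X_n,Y_n)},\, P_{X_n}\times P_{Y_n})\to 0$. The forward direction of the equivalence with (\ref{lem2'''}) is then immediate, since $P_{X_n}\times P_{Y_n}\in\Pi$ implies $\pi(P_{(X_n,Y_n)},\Pi)\le \pi(P_{(X_n,Y_n)},P_{X_n}\times P_{Y_n})$. All the real work is in the reverse implication.

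For the reverse direction, I would start by choosing, for each $n$, a near-minimizer $\nu_n=\mu_n^{(1)}\times\mu_n^{(2)}\in\Pi$ with $\pi(P_{(X_n,Y_n)},\nu_n)\le \pi(P_{(X_n,Y_n)},\Pi)+1/n=:\varepsilon_n\to 0$. The next step is to transfer this smallness to the product of marginals $P_{X_n}\times P_{Y_n}$ via the triangle inequality, which reduces everything to showing $\pi(P_{X_n}\times P_{Y_n},\,\mu_n^{(1)}\times\mu_n^{(2)})\to 0$.

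For that I would use two standard facts about the L\'evy--Prokhorov distance on Polish spaces. First, the coordinate projections $p_i:E_1\times E_2\to E_i$ are $1$-Lipschitz for the metric $d$, so pushforward under $p_i$ contracts $\pi$; applied to $P_{(X_n,Y_n)}$ and $\nu_n$, this yields $\pi(P_{X_n},\mu_n^{(1)})\le\varepsilon_n$ and $\pi(P_{Y_n},\mu_n^{(2)})\le\varepsilon_n$. Second, the Prokhorov distance behaves well under products: by Strassen's theorem, one can pick independent couplings $(U_n,V_n)$ of $(P_{X_n},\mu_n^{(1)})$ and $(U_n',V_n')$ of $(P_{Y_n},\mu_n^{(2)})$ with $\mathbb{P}(d_1(U_n,V_n)>\varepsilon_n)\le\varepsilon_n$ and analogously for the second pair. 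Then $((U_n,U_n'),(V_n,V_n'))$ is a coupling of $P_{X_n}\times P_{Y_n}$ and $\mu_n^{(1)}\times\mu_n^{(2)}$, and the union bound together with $d((x,x'),(y,y'))=d_1(x,y)+d_2(x',y')$ gives
\[
\mathbb{P}\bigl(d((U_n,U_n'),(V_n,V_n'))>2\varepsilon_n\bigr)\le 2\varepsilon_n,
\]
hence $\pi(P_{X_n}\times P_{Y_n},\,\mu_n^{(1)}\times\mu_n^{(2)})\le 2\varepsilon_n\to 0$. Combining with $\pi(P_{(X_n,Y_n)},\nu_n)\le\varepsilon_n$ through the triangle inequality finishes the proof.

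I do not expect any serious obstacle; the only delicate point is justifying that the Prokhorov distance of product measures is controlled by the Prokhorov distances of the marginals, which is where Strassen's representation is invoked. Once that lemma-level statement is in hand, the argument is just projecting to marginals and applying the triangle inequality twice.
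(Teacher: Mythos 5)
Your proof is correct and follows essentially the same route as the paper: both reduce the reverse implication to showing $\pi(P_{X_n}\times P_{Y_n},\,\mu_n^{(1)}\times\mu_n^{(2)})\to 0$ by first passing to the marginals and then building an independent product coupling on a product probability space, finishing with the triangle inequality. The only cosmetic difference is that you invoke Strassen's quantitative coupling where the paper uses the marginal-merging and Skorokhod-embedding results of \cite{D-R} (Theorem 2, parts C and B), which play exactly the same role.
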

\begin{proof}
\textbf{AI-1}$\Rightarrow$(\ref{lem2'''}) is obvious. From (\ref{lem2'''}) to \textbf{AI-1}: in this case there exist $\;(\mu_n),(\nu_n)$ such that 
\begin{equation}\label{lem2''''}
\pi(P_{(X_n,Y_n)},\;\mu_n\times\nu_n) \to 0,\; n\to+\infty.
 \end{equation}
From (\ref{lem2''''}) and \cite{D-R}, Theorem 2,C, it follows that $\pi(P_{X_n},\: \mu_n) \to 0$ and $\pi(P_{Y_n},\: \nu_n) \to 0$.
Applying the Skorokhod embedding theorem \cite{D-R}, Theorem 2,B, we get that there exist two probability spaces $(\Omega_1,{\cal F}_1,\mathbb{P}_1)$ and $(\Omega_2,{\cal F}_2,\mathbb{P}_2)$, random elements $X_n',Y_n'$ on $\Omega_1$ and random elements $X_n'',Y_n''$ on $\Omega_2$ such that $P_{X_n'}=P_{X_n},\: P_{Y_n'}=\mu_n,\: P_{X_n''}=P_{Y_n},\: P_{Y_n''}=\nu_n$ and $d_1(X_n',Y_n')\xrightarrow{P} 0,\:d_2(X_n'',Y_n'')\xrightarrow{P} 0$. 

We can also consider $X_n',X_n'',Y_n',Y_n''$ as random elements of the probability space $\Omega_3=\Omega_1 \times \Omega_2$. Then $d((X_n',X_n''),\:(Y_n',Y_n''))\xrightarrow{P} 0$ and\\ $P_{(X_n',X_n'')}=P_{X_n}\times P_{Y_n}, \; P_{(Y_n',Y_n'')}=\mu_n \times \nu_n$. Hence, $$\pi(P_{X_n}\times P_{Y_n},\;\mu_n \times \nu_n) \to 0, \; n\to+\infty.$$ To conclude the proof, combine this with (\ref{lem2''''}). 
\end{proof} 
 \section{Sufficient conditions for AI} 
 Here we propose two useful sufficient conditions for verification of
 {\bf AI-4} and {\bf AI-3}.
 \begin{proposition}
 Suppose that $(X_n'),(Y_n')$ are such that:
 
 1) $X_n'$ and $Y_n'$ are independent for all $n;$
 
 2) $\mathbb{P}\{X_n \neq X_n'\} \to 0,\;\; \mathbb{P}\{Y_n \neq Y_n'\}\to 0,\;\; n \to +\infty$.
 
 Then for $(X_n)$ and $(Y_n)$ {\bf{AI-4}} holds.
 \end{proposition}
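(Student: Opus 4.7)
The plan is to run a straightforward triangle-inequality argument, interposing the joint law of the independent pair $(X_n',Y_n')$ between $P_{(X_n,Y_n)}$ and $P_{X_n}\times P_{Y_n}$. Everything rests on one elementary coupling fact: if $U$ and $V$ are two random elements on the same probability space and the same measurable space $(E,\mathcal{E})$, then $\|P_U-P_V\|_{var}\le c\,\mathbb{P}\{U\ne V\}$, because for every $A\in\mathcal{E}$ one has $|\mathbb{P}(U\in A)-\mathbb{P}(V\in A)|\le\mathbb{P}(U\in A,V\notin A)+\mathbb{P}(U\notin A,V\in A)\le\mathbb{P}\{U\ne V\}$ (the constant $c\in\{1,2\}$ is absorbed by the normalization convention).

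First, I would apply this fact to $(X_n,Y_n)$ versus $(X_n',Y_n')$ viewed as random elements of $E_1\times E_2$: since $\{(X_n,Y_n)\ne(X_n',Y_n')\}\subset\{X_n\ne X_n'\}\cup\{Y_n\ne Y_n'\}$, hypothesis 2) gives $\|P_{(X_n,Y_n)}-P_{(X_n',Y_n')}\|_{var}\to 0$. The same inequality applied to the marginals yields $\|P_{X_n}-P_{X_n'}\|_{var}\to 0$ and $\|P_{Y_n}-P_{Y_n'}\|_{var}\to 0$.

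Next, hypothesis 1) means $P_{(X_n',Y_n')}=P_{X_n'}\times P_{Y_n'}$, so it remains to compare $P_{X_n}\times P_{Y_n}$ with $P_{X_n'}\times P_{Y_n'}$. I would use the telescoping identity
$$P_{X_n}\times P_{Y_n}-P_{X_n'}\times P_{Y_n'}=(P_{X_n}-P_{X_n'})\times P_{Y_n}+P_{X_n'}\times (P_{Y_n}-P_{Y_n'})$$
together with the routine bound $\|\lambda\times\mu\|_{var}\le\|\lambda\|_{var}$ valid for any signed measure $\lambda$ and probability measure $\mu$ (immediate from Fubini applied to a Hahn decomposition of $\lambda$). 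This yields $\|P_{X_n}\times P_{Y_n}-P_{X_n'}\times P_{Y_n'}\|_{var}\le \|P_{X_n}-P_{X_n'}\|_{var}+\|P_{Y_n}-P_{Y_n'}\|_{var}\to 0$.

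Finally, the triangle inequality
$$\|P_{(X_n,Y_n)}-P_{X_n}\times P_{Y_n}\|_{var}\le\|P_{(X_n,Y_n)}-P_{(X_n',Y_n')}\|_{var}+\|P_{X_n'}\times P_{Y_n'}-P_{X_n}\times P_{Y_n}\|_{var}$$
closes the argument and delivers \textbf{AI-4}. I do not foresee any genuine obstacle: the only step deserving a moment's care is the product-measure estimate, but it is standard.
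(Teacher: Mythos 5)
Your proof is correct and follows essentially the same route as the paper's: both rest on the coupling bound $\|P_{\xi}-P_{\eta}\|_{var}\le 2\,\mathbb{P}\{\xi\ne\eta\}$ applied to $(X_n,Y_n)$ versus the independent pair $(X_n',Y_n')$. Your write-up is in fact slightly more complete, since you also spell out the comparison of $P_{X_n}\times P_{Y_n}$ with $P_{X_n'}\times P_{Y_n'}$ via the telescoping identity, a step the paper leaves implicit.
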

 \begin{proof}
 Remark that 2) implies $\mathbb{P}\{(X_n,Y_n)\neq(X_n', Y_n')\}\to 0, \; n\to+\infty$. To complete the proof, recall the well-known fact that if 
 $\mathbb{P}\{\xi \neq \eta\}=\delta$ then $||P_{\xi}-P_{\eta}||_{var} \leq 2\delta$.
 \end{proof}
 \begin{proposition}
 Suppose that $X_n$ and $Y_n$ are conditionally independent given $\Omega_n$ and $\mathbb{P}(\Omega_n)\to 1$. 
 
 Then {\bf{AI-3}} is satisfied. 
 \end{proposition}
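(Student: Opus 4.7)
Here "conditionally independent given $\Omega_n$" means that under the conditional probability $\mathbb{P}(\,\cdot\,|\,\Omega_n)$ the random elements $X_n$ and $Y_n$ are independent, i.e.\ for all $A\in\mathcal{E}_1$ and $B\in\mathcal{E}_2$,
\[
 \mathbb{P}(X_n\in A,\; Y_n\in B\,|\,\Omega_n)=\mathbb{P}(X_n\in A\,|\,\Omega_n)\,\mathbb{P}(Y_n\in B\,|\,\Omega_n).
\]
Setting $p_n=\mathbb{P}(\Omega_n)$, this can be rewritten as
\[
 p_n\cdot\mathbb{P}(X_n\in A,\; Y_n\in B,\;\Omega_n)=\mathbb{P}(X_n\in A,\;\Omega_n)\,\mathbb{P}(Y_n\in B,\;\Omega_n).
\]

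The plan is to estimate $|\,\mathbb{P}(X_n\in A,Y_n\in B)-\mathbb{P}(X_n\in A)\mathbb{P}(Y_n\in B)\,|$ by decomposing every probability according to whether $\Omega_n$ occurs, and showing that the contribution of $\Omega_n^c$ is uniformly $O(1-p_n)$. Concretely, I would write
\[
 \mathbb{P}(X_n\in A)=\mathbb{P}(X_n\in A,\;\Omega_n)+\varepsilon_n^{(1)}(A),\qquad 0\leq\varepsilon_n^{(1)}(A)\leq 1-p_n,
\]
and analogously for $\mathbb{P}(Y_n\in B)$ and for the joint probability $\mathbb{P}(X_n\in A,Y_n\in B)$. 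Then I would substitute into the identity above and replace $\mathbb{P}(X_n\in A,Y_n\in B,\Omega_n)$ by $\frac{1}{p_n}\mathbb{P}(X_n\in A,\Omega_n)\mathbb{P}(Y_n\in B,\Omega_n)$ using conditional independence.

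The computation then expresses the difference $\mathbb{P}(X_n\in A,Y_n\in B)-\mathbb{P}(X_n\in A)\mathbb{P}(Y_n\in B)$ as a sum of a "main" term
\[
 \Bigl(\tfrac{1}{p_n}-1\Bigr)\mathbb{P}(X_n\in A,\Omega_n)\,\mathbb{P}(Y_n\in B,\Omega_n),
\]
bounded by $p_n(1-p_n)\leq 1-p_n$, together with a handful of remainder terms each involving at least one factor of size $\leq 1-p_n$ (from the $\varepsilon_n^{(i)}$'s) and all other factors bounded by $1$. Collecting these uniform bounds gives
\[
 \sup_{A\in\mathcal{E}_1,\, B\in\mathcal{E}_2}\bigl|\mathbb{P}(X_n\in A,\,Y_n\in B)-\mathbb{P}(X_n\in A)\mathbb{P}(Y_n\in B)\bigr|\leq C(1-p_n)
\]
for some absolute constant $C$ (one may take $C=5$), and the hypothesis $p_n\to 1$ yields \textbf{AI-3}.

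There is no real obstacle: the argument is pure bookkeeping once one observes that every quantity outside of $\Omega_n$ is $O(1-p_n)$, and that conditional independence on $\Omega_n$ gives an exact factorization of the joint restricted to $\Omega_n$. The only point requiring a little care is keeping the $1/p_n$ factor under control, which is harmless because $p_n\to 1$ (in particular $p_n\geq 1/2$ eventually), and ensuring that all the arising bounds are genuinely uniform in $(A,B)$, which they manifestly are since they depend only on $p_n$.
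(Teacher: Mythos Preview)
Your argument is correct and follows essentially the same approach as the paper: decompose each probability according to $\Omega_n$ versus $\Omega_n^c$, replace the joint term on $\Omega_n$ by the product via conditional independence, and collect the $O(1-p_n)$ remainders uniformly in $A,B$. The paper arrives at the explicit bound $2\delta_n\bigl(1+\tfrac{1}{1-\delta_n}\bigr)$ with $\delta_n=1-p_n$, while your bookkeeping gives $5(1-p_n)$; both are the same estimate.
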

 \begin{proof}
 Let $A \in \mathcal{E}_1, B \in \mathcal{E}_2$. It is clear that 
 $$
 \mathbb{P}\{X_n \in A,Y_n\in B\}=\mathbb{P}\{X_n \in A,Y_n \in B,\; \Omega_n\}+
 r_n,
 $$
  where $r_n\leq \delta_n:=P\{\Omega_n^c\}$. 
  It  follows from the conditional independence that 
  $$
  \mathbb{P}\{X_n \in A, Y_n \in B\;|\; \Omega_n\}=
 \mathbb{ P}\{X_n \in A\; |\; \Omega_n\}\mathbb{ P}\{Y_n \in B\; |\; \Omega_n\},
 $$
 that is, 
 $$
 \mathbb{P}\{X_n \in A, Y_n \in B,\; \Omega_n\}=
 \frac{1}{\mathbb{P}(\Omega_n)}\mathbb{P}\{X_n \in A ,\; \Omega_n\} 
 \mathbb{P}\{Y_n \in B ,\; \Omega_n\}.
 $$
  Hence 
\begin{align*}
\Delta_n:&=
 |\mathbb {P}\{X_n\in A,\; Y_n\in B\}-\mathbb {P}\{X_n \in A\}\mathbb {P}\{Y_n \in B\}|\\
&  \leq 
 \delta_n + |\mathbb {P}\{X_n \in A,\;Y_n \in B,\;\Omega_n\}-\mathbb {P}\{X_n\in A\}\mathbb {P}\{Y_n \in B\}|\\
 & =
 \delta_n+\left|\frac{1}{\mathbb {P}(\Omega_n)}\mathbb {P}\{X_n \in A , \Omega_n\} \mathbb {P}\{Y_n \in B , \Omega_n\}-\mathbb {P}\{X_n\in A\}\mathbb {P}\{Y_n \in B\}\right|.
 \end{align*}
 Therefore,
 \begin{align*} 
 \Delta_n&
 \leq \delta_n+\mathbb {P}\{X_n \in A, \Omega_n\}\left|\frac{1}{\mathbb {P}(\Omega_n)}\mathbb {P}\{Y_n\in B, \Omega_n\}-\mathbb {P}\{Y_n\in B\}\right|\\
& + \mathbb {P}\{Y_n \in B\}|\mathbb {P}\{X_n\in A, \Omega_n\}-\mathbb {P}\{X_n \in A\}|\\
&\leq 2\delta_n+\frac{1}{1-\delta_n}|\mathbb {P}\{Y_n\in B,\Omega_n\}-\mathbb {P}\{Y_n \in B\}\mathbb {P}(\Omega_n)|\\
&\leq 2\delta_n\left(1+\frac{1}{1-\delta_n}\right).
 \end{align*} 
  As this estimate is uniform in $A$ and in $B$, we get
   \textbf{AI-3}.
 \end{proof}

\subsection{Tight sequences}
Now we consider another important case when both sequences of distributions of  $X_n$ and $Y_n$ are tight.
\begin{proposition}
Suppose that $(P_{X_n})$ and $(P_{Y_n})$ are tight.
The following implications take place: 
\begin{center}
{\bf{AI-4}} $\Rightarrow$ {\bf{AI-3}} $\Rightarrow$ {\bf{AI-2}}  $\Rightarrow$ {\bf{AI-1}} $\Rightarrow$ {\bf{AI-0}}.
\end{center}
Moreover, in this case  {\bf{AI-0}} $\Rightarrow$ {\bf{AI-1}}.
\end{proposition}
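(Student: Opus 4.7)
The chain AI-4 $\Rightarrow$ AI-3 $\Rightarrow$ AI-2 is already noted as trivial, AI-2 $\Rightarrow$ AI-0 is Proposition 1, and AI-1 $\Rightarrow$ AI-0 follows by plugging $h(x,y)=f(x)g(y)$ into AI-1 (such an $h$ is bounded uniformly continuous on the product once $f,g$ are). Hence the whole statement reduces to the ``moreover'' assertion AI-0 $\Rightarrow$ AI-1 under tightness; the missing link AI-2 $\Rightarrow$ AI-1 in the displayed chain then comes for free via AI-0.

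For AI-0 $\Rightarrow$ AI-1 under tightness my plan is a Prokhorov subsequence argument. First, tightness of $(P_{X_n})$ and $(P_{Y_n})$ transfers uniformly to both $(P_{(X_n,Y_n)})$ and $(P_{X_n}\times P_{Y_n})$: given $\varepsilon>0$, pick compacts $K_i$ with $\inf_n P_{X_n}(K_1) \geq 1-\varepsilon/2$ and $\inf_n P_{Y_n}(K_2) \geq 1-\varepsilon/2$; then $K_1\times K_2$ is compact and has mass $\geq 1-\varepsilon$ for both families simultaneously. I argue by contradiction: assume AI-1 fails, so there exist a bounded uniformly continuous $h:E_1\times E_2\to\mathbb{R}$, a $\delta>0$, and a subsequence $(n_k)$ along which $|\int h\, dP_{(X_{n_k},Y_{n_k})} - \int h\, d(P_{X_{n_k}}\times P_{Y_{n_k}})| \geq \delta$. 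Using Prokhorov and the tightness above, extract a further subsequence (still written $n_k$) on which $P_{X_{n_k}}\Rightarrow\mu$, $P_{Y_{n_k}}\Rightarrow\nu$, and $P_{(X_{n_k},Y_{n_k})}\Rightarrow\rho$ for some Borel probabilities; the elementary fact that weak convergence of marginals implies weak convergence of the corresponding product measures gives $P_{X_{n_k}}\times P_{Y_{n_k}}\Rightarrow\mu\times\nu$.

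Now I push AI-0 through the limit. For any bounded uniformly continuous $f,g$, the map $(x,y)\mapsto f(x)g(y)$ is bounded uniformly continuous on $E_1\times E_2$, so the portmanteau theorem applied to both weakly convergent sequences, combined with AI-0, yields $\int f(x)g(y)\, d\rho = \int f(x)g(y)\, d(\mu\times\nu)$ for every such pair. The key identification step is to conclude from this that $\rho=\mu\times\nu$: I would approximate $\mathbf{1}_U,\mathbf{1}_V$ for open $U\subset E_1$, $V\subset E_2$ monotonically from below by bounded Lipschitz (hence uniformly continuous) functions $f_m(x)=\min\{1,m\, d_1(x,U^c)\}$, $g_m(y)=\min\{1,m\, d_2(y,V^c)\}$, then apply monotone convergence to deduce $\rho(U\times V)=(\mu\times\nu)(U\times V)$ for all open rectangles. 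Since the open rectangles form a $\pi$-system generating the Borel $\sigma$-algebra of the Polish product, the $\pi$-$\lambda$ theorem forces $\rho=\mu\times\nu$. Applying weak convergence to the original $h$ then gives $\int h\, dP_{(X_{n_k},Y_{n_k})}\to\int h\, d\rho$ and $\int h\, d(P_{X_{n_k}}\times P_{Y_{n_k}})\to\int h\, d(\mu\times\nu)$, whose coincidence contradicts the $\delta$-separation.

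The main obstacle I anticipate is precisely this identification step, together with the careful double use of tightness: once to extract $\mu,\nu,\rho$ via Prokhorov, and once implicitly to guarantee that $P_{X_{n_k}}\times P_{Y_{n_k}}$ has an honest weak limit, so that the AI-0 vanishing lifts to a genuine equality of limiting integrals rather than a cheaper weak-$*$ statement on a dense subclass. Everything else — the trivial implications, the product-of-marginals convergence, the Lipschitz approximation of indicators, and the final contradiction — is routine bookkeeping.
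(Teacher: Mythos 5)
Your proof is correct and follows essentially the same route as the paper: reduce everything to \textbf{AI-0} $\Rightarrow$ \textbf{AI-1} under tightness, argue by contradiction, extract weakly convergent subsequences via Prokhorov, and identify the limit of the joint laws as the product of the marginal limits. The only difference is that you spell out the identification step (Lipschitz approximation of indicators of rectangles plus the $\pi$--$\lambda$ theorem), which the paper delegates to its Lemmas 1 and 2 and labels ``standard.''
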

\begin{proof}
We know that the first line of implications, except for {\bf{AI-2}} $\Rightarrow$ {\bf{AI-1}} always takes place, but {\bf{AI-2}} implies {\bf{AI-0}}, so we only have to prove \textbf{AI-0} $\Rightarrow$ \textbf{AI-1}. We will need two lemmas:
\begin{lemma}
Let $\mu$, $\nu$ be  probability measures on a product of two Polish spaces \\
$(E_1\times E_2,\; \mathcal{E}_1\times \mathcal{E}_2)$. 
If $\mu(F\times G) = \nu(F\times G)$ for all closed sets $F\in \mathcal{E}_1,\; G \in \mathcal{E}_2,$ then  $\mu =\nu$.
\end{lemma}
\begin{proof}
Let $\mu_1,\;\mu_2$ be marginal distributions of $\mu.$
By regularity of $\mu_1,\;\mu_2$ for arbitrary $A\in \mathcal{E}_1,\;
B\in \mathcal{E}_2$ there exist two sequences of closed sets\\
$F_n\subset A, F_n \in \mathcal{E}_1,\;\; 
G_n\subset B, G_n\in \mathcal{E}_2, $  such that
$$
\mu_1\{A\setminus F_n\}\rightarrow 0,\;\;\; 
\mu_2\{B\setminus G_n\}\rightarrow 0.
$$
As
$$
(A\times B)\setminus (F_n\times G_n) \subset
[(A\setminus F_n)\times E_2]\cup[E_1\times (B\setminus G_n)],
$$
we have 
$$
\mu\{(A\times B)\setminus (F_n\times G_n)\}\rightarrow 0.
$$
From this remark and the condition of lemma it follows 
that $\mu =\nu$ on the algebra generated by cells, hence they coincide on $\mathcal{E}_1\times \mathcal{E}_2.$
\end{proof}
\begin{lemma}
Suppose $(P_n)$, $(Q_n)$, $(L_n)$ are sequences  of probability measures on $\mathcal{E}_1$, $\mathcal{E}_2$ and $\mathcal{E}_1 \times \mathcal{E}_2$ respectively. Suppose that $P_n \Rightarrow P$, $Q_n \Rightarrow Q$, $L_n \Rightarrow L$. Moreover, suppose that  
for all bounded uniformly continuous functions\\ $f:E_1 \to \mathbb{R}$, $g:E_2 \to \mathbb{R}$, 
\begin{equation}\label{lem2}
\int f(x)g(y) L_n(dx,dy) - \int f(x) P_n(dx) \int g(y) Q_n(dy) \to 0,\; n\to+\infty. 
\end{equation}
Then $L = P \times Q$. 
\begin{rem}
If $E_1=\mathbb{R}^m,E_2=\mathbb{R}^k$, instead of (\ref{lem2}) the following condition on characteristic functions is sufficient: 

for all $\overline{t} \in E_1,\; \overline{s} \in E_2,$
\begin{equation}\label{lem2'}
\phi_{L_n}(\overline{t},\overline{s})-\phi_{P_n}(\overline{t})\phi_{Q_n}(\overline{s})\to 0,\;\;\; n\to +\infty.
\end{equation}
\end{rem}
\end{lemma}
\begin{proof} We omit the proof which is standard.
\end{proof}

Let us return to the proof of the implication \textbf{AI-0}$\Rightarrow$\textbf{AI-1}. Suppose it is not true. Then there exist $\delta>0$ and a subsequence $(n')\subset \mathbb{N}$ such that for all $n'$
\begin{equation}\label{lem2(3)}
\rho(P_{(X_{n'},Y_{n'})},P_{X_{n'}}\times P_{Y_{n'}})\geq \delta.
\end{equation} 
From $(n')$ choose $(n'')\subset (n')$ such that 
\begin{equation}\label{lem2(4)}
P_{X_{n''}}\Rightarrow P,\;P_{Y_{n''}}\Rightarrow Q, \;P_{(X_{n''},Y_{n''})}\Rightarrow L.
\end{equation} 
Due to condition \textbf{AI-0} and Lemma 2, it follows from (\ref{lem2(4)}) that $L = P \times Q$, which contradicts (\ref{lem2(3)}).
\end{proof}

\subsection{Case $E_1=\mathbb{R}^m$,\;$E_2=\mathbb{R}^k$.}
\begin{proposition}\label{char}
Suppose that $(P_{X_n})$ and $(P_{Y_n})$ are tight. The following conditions are equivalent:

1) {\bf{AI-1}}.

 2) For all  $ \overline{t} \in E_1, \overline{s} \in E_2,$ for
 characteristic functions 
 \begin{equation}\label{char-f}
 \phi_{(X_n,Y_n)}(\overline{t},\overline{s})-\phi_{X_n}(\overline{t})\phi_{Y_n}(\overline{s})\to 0,\;\;\; n\to +\infty.
 \end{equation}
\end{proposition}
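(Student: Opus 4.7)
The plan is to prove the two implications separately: $1)\Rightarrow 2)$ by plugging $\cos$ and $\sin$ into the definition of \textbf{AI-1}, and $2)\Rightarrow 1)$ by a tightness-plus-subsequence argument modelled on the proof of the previous proposition.

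For $1)\Rightarrow 2)$, fix $\overline{t}\in \mathbb{R}^m,\;\overline{s}\in \mathbb{R}^k$ and consider the functions $h_1(x,y)=\cos(\overline{t}\cdot x+\overline{s}\cdot y)$ and $h_2(x,y)=\sin(\overline{t}\cdot x+\overline{s}\cdot y)$. Both are Lipschitz, in particular bounded and uniformly continuous on $\mathbb{R}^m\times \mathbb{R}^k$. Applying \textbf{AI-1} to $h_1$ and $h_2$ and assembling the result into a complex-valued identity yields
\[
\int e^{i(\overline{t}\cdot x+\overline{s}\cdot y)}\,P_{(X_n,Y_n)}(dx,dy)-\int e^{i(\overline{t}\cdot x+\overline{s}\cdot y)}\,(P_{X_n}\times P_{Y_n})(dx,dy)\to 0.
\]
By Fubini's theorem the second integral equals $\phi_{X_n}(\overline{t})\phi_{Y_n}(\overline{s})$, which is precisely (\ref{char-f}).

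For $2)\Rightarrow 1)$, I argue by contradiction. Suppose \textbf{AI-1} fails; then there exist $\delta>0$ and a subsequence $(n')$ along which $\pi(P_{(X_{n'},Y_{n'})},\;P_{X_{n'}}\times P_{Y_{n'}})\geq \delta$. Marginal tightness of $(P_{X_n})$ and $(P_{Y_n})$ implies joint tightness of $(P_{(X_n,Y_n)})$ (products of marginal compact sets are compact), so by Prokhorov's theorem one can extract $(n'')\subset(n')$ such that
\[
P_{X_{n''}}\Rightarrow P,\qquad P_{Y_{n''}}\Rightarrow Q,\qquad P_{(X_{n''},Y_{n''})}\Rightarrow L,
\]
for some probability measures $P,Q,L$ on $\mathbb{R}^m,\;\mathbb{R}^k,\;\mathbb{R}^{m+k}$ respectively.

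Weak convergence entails pointwise convergence of the associated characteristic functions, so passing to the limit in (\ref{char-f}) along $(n'')$ gives $\phi_L(\overline{t},\overline{s})=\phi_P(\overline{t})\phi_Q(\overline{s})$ for every $(\overline{t},\overline{s})$. By the uniqueness theorem for characteristic functions on $\mathbb{R}^{m+k}$, this forces $L=P\times Q$. On the other hand, weak convergence of each marginal implies $P_{X_{n''}}\times P_{Y_{n''}}\Rightarrow P\times Q$ as well. Both $P_{(X_{n''},Y_{n''})}$ and $P_{X_{n''}}\times P_{Y_{n''}}$ therefore converge weakly to the same limit $P\times Q$, so their L\'{e}vy--Prokhorov distance tends to $0$, contradicting $\pi(P_{(X_{n''},Y_{n''})},\;P_{X_{n''}}\times P_{Y_{n''}})\geq \delta$. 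The only point requiring any care is the extraction of a single subsequence on which marginals \emph{and} joint distributions converge, which the joint-tightness observation above disposes of; the rest is parallel to the previous proposition.
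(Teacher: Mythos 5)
Your proof is correct and follows essentially the same route as the paper: direction $1)\Rightarrow 2)$ by testing with bounded uniformly continuous trigonometric functions, and direction $2)\Rightarrow 1)$ by contradiction via tightness, Prokhorov extraction of a jointly convergent subsequence, and the uniqueness theorem for characteristic functions. The only difference is cosmetic: where the paper invokes its Lemma 2 (and the accompanying remark on characteristic functions, whose proof it omits as standard), you spell out that argument inline, including the useful explicit observations that marginal tightness yields joint tightness and that two sequences converging weakly to the same limit merge in the L\'evy--Prokhorov metric.
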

\begin{proof}
Condition 2) can be rewritten as 
$$
\mathbb{E}e^{i\overline{t}X_n}e^{i\overline{s}Y_n}-\mathbb{E}e^{i\overline{t}\cdot X_n}\mathbb{E}e^{i\overline{s}\cdot Y_n}\to 0,\;\;\; n\to +\infty.
$$ 
This condition follows from \textbf{AI-0}, hence, 1)$\Rightarrow$2).

Now suppose that 2) holds but 1) does not hold. Then there exist $\delta>0$ and a subsequence $(n')\subset \mathbb{N}$ such that for all $n'$
\begin{equation}\label{prop7}
\pi(P_{(X_{n'},Y_{n'})},P_{X_{n'}}\times P_{Y_{n'}})\geq \delta.
\end{equation}
Due to relative compactness we can find a subsequence $(n'')\subset(n')$ such that
$$
P_{X_{n''}}\Rightarrow P,\;\;P_{Y_{n''}}\Rightarrow Q,\;\;P_{(X_{n''},Y_{n''})}\Rightarrow L.
$$ 
From 2) and Lemma 2 we have $L=P\times Q$. This is a contradiction with (\ref{prop7}).
\end{proof}

Consider now the case when the joint distribution $P_{(X_n,\,Y_n)}$ is Gaussian. Let $X_n = (X_n^{(1)},\ldots,X_n^{(m)}),\;\;
Y_n = (Y_n^{(1)},\ldots,Y_n^{(k)}),\;\,EX_n=a_n,\;$\\$EY_n = b_n,$ and
$ {\text {cov}} \{X_n^{(i)},\,Y_n^{(j)}\} = r_n^{i,j}, \;1\leq i\leq m,\;1\leq j\leq k.$
\begin{proposition}
The following conditions are equivalent:

1) The sequences $(P_{X_n}),\;(P_{Y_n})$ are tight and satisfy the condition
{\bf{AI-1}}.

 2) The sequences $(a_n), \,(b_n),\, (E|X_n|^2),\;(E|Y_n|^2)$ are bounded and for all $i,j,\;1\leq i\leq m,\;1\leq j\leq k,$
 \begin{equation}\label{corr}
  r_n^{i,j} \rightarrow 0.
  \end{equation}
\end{proposition}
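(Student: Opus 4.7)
The plan is to use Proposition \ref{char} to convert \textbf{AI-1} into pointwise convergence of characteristic functions and then exploit the explicit Gaussian formula. Recall that for jointly Gaussian $(X_n,Y_n)$ with marginal covariance matrices $\Sigma_n^X, \Sigma_n^Y$ and cross-covariance matrix $R_n=(r_n^{i,j})$, one has the factorisation
$$
\phi_{(X_n,Y_n)}(t,s)=\phi_{X_n}(t)\,\phi_{Y_n}(s)\,\exp(-t^T R_n s),
$$
together with the modulus identity $|\phi_{X_n}(t)\phi_{Y_n}(s)|=\exp(-\tfrac12(t^T\Sigma_n^X t + s^T\Sigma_n^Y s))$, which depends only on the marginal covariances. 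The whole argument revolves around this identity.

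For $2)\Rightarrow 1)$, boundedness of $(E|X_n|^2),(E|Y_n|^2)$ yields tightness of $(P_{X_n}),(P_{Y_n})$ via Markov's inequality applied to $|X_n|^2, |Y_n|^2$. For \textbf{AI-1} I would verify the characteristic-function criterion of Proposition \ref{char}: the identity above gives
$$
\phi_{(X_n,Y_n)}(t,s) - \phi_{X_n}(t)\phi_{Y_n}(s) = \phi_{X_n}(t)\phi_{Y_n}(s)\bigl(\exp(-t^T R_n s) - 1\bigr);
$$
since every $r_n^{i,j}\to 0$ we have $t^T R_n s\to 0$, while the leading factor has modulus $\leq 1$, so the difference vanishes pointwise, which is exactly \eqref{char-f}.

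For $1)\Rightarrow 2)$, I would first establish that tightness of a Gaussian sequence $(P_{X_n})$ forces $(a_n)$ and $(\tr\Sigma_n^X)$ to be bounded, whence $E|X_n|^2=|a_n|^2+\tr\Sigma_n^X$ is bounded; the argument is the standard contradiction that if $|a_n|\to\infty$ or $\tr\Sigma_n^X\to\infty$ along a subsequence, then mass in $P_{X_n}$ escapes every compact set. Then Proposition \ref{char} yields the pointwise convergence $\phi_{(X_n,Y_n)}(t,s)-\phi_{X_n}(t)\phi_{Y_n}(s)\to 0$, and the factorisation reduces this to $\phi_{X_n}(t)\phi_{Y_n}(s)(\exp(-t^T R_n s)-1)\to 0$. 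Crucially, bounded second moments make $|\phi_{X_n}(t)\phi_{Y_n}(s)|$ bounded \emph{below} by a positive constant depending on $(t,s)$ (by the modulus identity above), so one can divide and conclude $t^T R_n s\to 0$ for all $(t,s)$; specialising to $t=e_i,\ s=e_j$ extracts $r_n^{i,j}\to 0$. The main obstacle is the clean extraction of bounded first and second moments from tightness of a Gaussian sequence; once this is in hand, everything else is a direct reading of the Gaussian characteristic-function identity.
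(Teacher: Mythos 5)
Your proposal is correct and follows essentially the same route as the paper: it combines the equivalence of tightness with boundedness of the first two moments for Gaussian vectors, Proposition \ref{char} to pass between \textbf{AI-1} and the characteristic-function condition \eqref{char-f}, and the explicit Gaussian factorisation to pass between \eqref{char-f} and \eqref{corr}. The paper leaves these steps as ``well known'' or ``easy to check,'' and your write-up simply supplies the details (in particular the lower bound on $|\phi_{X_n}(t)\phi_{Y_n}(s)|$ needed to divide out the leading factor), all of which check out.
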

\begin{proof}
It is well known that for Gaussian vectors boundedness of the first two moments is equivalent to tightness.

It is clear that the condition  (\ref{corr}) gives (\ref{char-f}). 
 Using tightness it is easy to check that (\ref{corr}) also follows from
 (\ref{char-f}), hence
due to Prop.\ref{char}
 the equivalence $1) \Longleftrightarrow 2)$ follows.
\end{proof}

\section{AI under transformations}
If $(X_n),\,(Y_n)$ are independent sequences of random elements, 
then their images $(f_n(X_n)),\,(g_n(Y_n))$ under arbitrary sequences of measurable mappings $(f_n),\,(g_n)$ are also independent.
This stability property does not hold in ge\-ne\-ral when we change independence to 
asymptotic independence.
   
   The next proposition contains information about stability of different types
   of AI under different classes of transformations.

\begin{proposition}
${}$

\begin{itemize}
\item[1)]
 If $(X_n),\,(Y_n)$ satisfy {\bf{AI-0}} then $(u(X_n)),\,(v(Y_n))$ satisfy {\bf{AI-0}} for all 
 uniformly continuous functions $u,v$.

\item[2)] If $(X_n),\,(Y_n)$ satisfy {\bf{AI-1}} then $(u(X_n)),\,(v(Y_n))$ satisfy {\bf{AI-1}} for all 
uniformly continuous functions $u,v$.

\item[3)] If $(X_n),\,(Y_n)$ satisfy {\bf{AI-2}} then $(u(X_n)),\,(v(Y_n))$ satisfy {\bf{AI-2}} for all measurable functions $u,v$.

\item[4)] If $(X_n),\,(Y_n)$ satisfy {\bf{AI-3}} then $(u_n(X_n)),\,(v_n(Y_n))$ satisfy {\bf{AI-3}} for all measurable functions $u_n,v_n$.

\item[5)] If $(X_n),\,(Y_n)$ satisfy {\bf{AI-4}} then $(u_n(X_n)),\,(v_n(Y_n))$ satisfy {\bf{AI-4}} for all measurable functions $u_n,v_n$.
\end{itemize}
\end{proposition}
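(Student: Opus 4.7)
My plan is to reduce each of the five items to the defining property for the original pair $(X_n,Y_n)$ by transferring either the test functions or the test sets through $(u,v)$ (or $(u_n,v_n)$). The guiding principle is that definitions expressed \emph{pointwise} in test sets or functions (namely \textbf{AI-0}, \textbf{AI-1}, \textbf{AI-2}) can only tolerate transformations that are independent of $n$, whereas the \emph{uniform} conditions \textbf{AI-3} and \textbf{AI-4} are robust enough to absorb $n$-dependent transformations; this matches the way the proposition is stated.

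For parts 1) and 2) I will exploit the fact that composing a bounded uniformly continuous function with uniformly continuous maps yields a bounded uniformly continuous function. Concretely, for 1), given bounded u.c.\ $f:E_1'\to\mathbb{R}$ and $g:E_2'\to\mathbb{R}$, the functions $f\circ u$ and $g\circ v$ are bounded u.c.\ on $E_1,E_2$, so \textbf{AI-0} for $(X_n,Y_n)$ applied to this pair gives the conclusion. For 2), given bounded u.c.\ $h:E_1'\times E_2'\to\mathbb{R}$, I set $H(x,y)=h(u(x),v(y))$ and verify that $H$ is bounded and uniformly continuous for the product metric $d$ by a short modulus-of-continuity argument combining those of $h$, $u$, $v$; then \textbf{AI-1} for $(X_n,Y_n)$ applied to $H$ gives 2).

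For parts 3) and 4), the workhorse is the pullback identity: for measurable $A'\in\mathcal{E}_1'$ and $B'\in\mathcal{E}_2'$, setting $A=u_n^{-1}(A')$, $B=v_n^{-1}(B')$, one has
$$
P_{(u_n(X_n),v_n(Y_n))}(A'\times B')-P_{u_n(X_n)}(A')P_{v_n(Y_n)}(B')=P_{(X_n,Y_n)}(A\times B)-P_{X_n}(A)P_{Y_n}(B).
$$
In case 3) the transformations do not depend on $n$, so $A,B$ are fixed and the pointwise \textbf{AI-2} applies directly. In case 4) I take the sup over $(A',B')$; since $(A,B)$ then ranges over a subfamily of $\mathcal{E}_1\times\mathcal{E}_2$, this sup is dominated by the \textbf{AI-3} quantity for the original pair, which tends to $0$.

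For part 5), I will invoke the standard contraction of the total variation norm under measurable pushforward, $\|T_\ast\mu-T_\ast\nu\|_{var}\leq\|\mu-\nu\|_{var}$, applied to $T_n(x,y)=(u_n(x),v_n(y))$. Combined with the fact that pushforward commutes with product measures, so that $(T_n)_\ast(P_{X_n}\times P_{Y_n})=P_{u_n(X_n)}\times P_{v_n(Y_n)}$, this immediately yields \textbf{AI-4} for the transformed sequences. There is no serious obstacle in this proposition; the only points needing mild care are the modulus-of-continuity verification in 2) and the bookkeeping in 3) that forces $u,v$ to be $n$-independent because \textbf{AI-2} is only a pointwise condition.
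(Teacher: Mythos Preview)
Your proposal is correct and follows the same approach as the paper: the authors simply state that 1)--4) ``follow directly from initial hypotheses'' (which is exactly the pullback/composition argument you spell out), and for 5) they use the pushforward map $w_n(x,y)=(u_n(x),v_n(y))$ together with the contraction $\|(P_{(X_n,Y_n)}-P_{X_n}\times P_{Y_n})w_n^{-1}\|_{var}\leq\|P_{(X_n,Y_n)}-P_{X_n}\times P_{Y_n}\|_{var}$, precisely as you do. Your write-up is more detailed than theirs, but the underlying ideas are identical.
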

\begin{proof}
Properties $1)\,-\,4)$ follow directly from initial hypotheses.


For 5) consider the mapping $w_n:(x,y)\mapsto (u_n(x),v_n(y))$. Then
due to \textbf{AI-4} for $(X_n)$ and $(Y_n)$ we have
\begin{align*}
||P_{(u_n(X_n),\,v_n(Y_n))}&-P_{u_n(X_n)}\times P_{v_n(Y_n)}||_{var}\nonumber \\
&= \displaystyle{||(P_{(X_n,Y_n)}-P_{X_n}\times P_{Y_n})w_n^{-1}||_{var}}\nonumber \\
&\leq \displaystyle{||P_{(X_n,Y_n)}-P_{X_n}\times P_{Y_n}||_{var} \to 0,}
\end{align*}
when $\;n\to+\infty.$
\end{proof}
\section{Counterexamples}
In this section we would like  to clarify the relationships between \textbf{AI-0} - \textbf{AI-4}. In order to do so, we will provide some counterexamples.

We saw in section 3.1 that in the case of tight sequences  the conditions
{\bf AI-1} and {\bf AI-0} are equivalent. It is easy to see that without the tightness assumption this equivalence
 will be preserved if the metric spaces 
$E_1$ and $E_2$ are compact. 
Indeed, let $U_1,\;U_2$ and $U$ be the spaces of 
bounded and uniformly continuous
real functions defined respectively 
on $E_1,\;E_2$ and\\ $E_1 \times E_2.$
Let $H$ be the closed subspace of $U$ formed by 
all linear combinations of the 
form $\sum_1^n f_i(x)g_i(y),$ where $f_i \in U_1,\; g_i\in U_2,\;n\in {\mathbb{N}}.$ 
If $E_1$ and $E_2$ are compact, it follows 
from the Stone-Weierstrass theorem (\cite{dunford}, VI.6.Th.16) that $H$ coincides with $U$, and it allows to
easily deduce {\bf AI-1} from {\bf AI-0}. 

The example constructed below shows that even in locally compact spaces
the equivalence between {\bf AI-1} and {\bf AI-0} may fail.

\subsection{AI-3 does not imply AI-1}

\begin{proposition}\label{contr1}
Let $E_1=\mathbb{R}$, $E_2=\mathbb{R}$. There exist sequences $(X_n)$ and $(Y_n)$
of random variables satisfying {\bf AI-3} but not {\bf AI-1}.
\end{proposition}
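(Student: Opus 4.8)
The plan is to exploit the one structural obstruction that any proof must confront. Since \textbf{AI-3} is precisely the statement $\alpha(\sigma(X_n),\sigma(Y_n))\to 0$, the Rosenblatt--Ibragimov covariance inequality $|\mathbb{E}f(X_n)g(Y_n)-\mathbb{E}f(X_n)\mathbb{E}g(Y_n)|\le 4\,\alpha(\sigma(X_n),\sigma(Y_n))\,\|f\|_\infty\|g\|_\infty$ shows that \textbf{AI-3} forces the left-hand side to $0$ for \emph{all} bounded measurable $f,g$, and by uniform approximation this propagates to every $h$ lying in the sup-norm closure $\bar H$ of finite sums $\sum_i f_i(x)g_i(y)$. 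In view of the Stone--Weierstrass remark preceding the proposition (on a compact space $\bar H=U$), a witness $h$ for the failure of \textbf{AI-1} can only exist on a non-compact space and must lie in $U\setminus\bar H$. This dictates the whole design: I will let the marginals escape to infinity along a \emph{uniformly discrete} set of points, on which every bounded function is automatically uniformly continuous, so that a deliberately unstructured (number-theoretic) pattern can serve as $h$ while \emph{every} rectangle discrepancy cancels.

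For the construction I would take, for each $n$, a prime $p=p_n\to\infty$ and a nontrivial multiplicative character $\chi$ modulo $p$ (with $\chi(0):=0$), and prescribe the joint law of $(X_n,Y_n)$ on the grid $\{0,\dots,p-1\}^2$ by $\mu_n(k,l)=\tfrac{1}{p^2}\bigl(1+\chi(k-l)\bigr)$. Since $\sum_{m}\chi(m)=0$, both marginals are exactly uniform, so $P_{X_n}\times P_{Y_n}=\nu_n$ with $\nu_n(k,l)=1/p^2$ and the signed measure is $D_n:=\mu_n-\nu_n=\tfrac{1}{p^2}\chi(k-l)$; moreover $\mu_n\ge 0$ because $\chi$ takes values in $\{-1,0,1\}$. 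I would then embed the $n$-th grid into a block of well-separated real points placed in pairwise far-apart regions of $\mathbb{R}$, so that the supports for distinct $n$ do not interfere, and define a single bounded Lipschitz (hence uniformly continuous) $h:\mathbb{R}^2\to[-1,1]$ that equals $\chi(k-l)$ on the $n$-th block (interpolated gently across the large gaps). The AI conditions depend only on the laws $P_{(X_n,Y_n)}$, so any convenient realization on a common probability space suffices.

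The verification splits into the two computations the construction is engineered for. For \textbf{AI-3}, since $\mu_n,\nu_n$ are supported on the $p$-point grid, the supremum over Borel rectangles reduces to the discrete quantity $\tfrac{1}{p^2}\max_{A,B}\bigl|\sum_{k\in A,\,l\in B}\chi(k-l)\bigr|$. The $p\times p$ circulant matrix $M_{kl}=\chi(k-l)$ is diagonalized by the additive characters with eigenvalues equal to Gauss sums of modulus $\sqrt p$ (and one zero eigenvalue from $\tau_0=\sum_m\chi(m)=0$), so $\|M\|_{\mathrm{op}}=\sqrt p$ and $\bigl|\mathbf 1_A^{\top}M\mathbf 1_B\bigr|\le\sqrt p\,\|\mathbf 1_A\|_2\|\mathbf 1_B\|_2\le p^{3/2}$; hence the rectangle discrepancy is $\le p^{-1/2}\to 0$, giving \textbf{AI-3}. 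On the other hand, $\int h\,dP_{(X_n,Y_n)}-\int h\,d(P_{X_n}\times P_{Y_n})=\sum_{k,l}\chi(k-l)\,D_n(k,l)=\tfrac{1}{p^2}\sum_{k,l}\chi(k-l)^2=\tfrac{1}{p^2}\#\{k\ne l\}=1-\tfrac1p\to 1\ne 0$, so this fixed uniformly continuous $h$ violates \textbf{AI-1}.

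The main obstacle is the cut-norm (i.e.\ $\alpha$-coefficient) estimate: one must show that every product event sees massive cancellation even though the total correlation mass is bounded below, and this is exactly the character-sum bound $|\sum_{A\times B}\chi(k-l)|\le p^{3/2}$ supplied by the flat Gauss-sum spectrum of the circulant $M$. A secondary point requiring care is producing a \emph{single} uniformly continuous $h$ that works for all $n$ simultaneously, which is why the blocks are placed on a uniformly discrete set in far-apart regions; there any bounded pattern is uniformly continuous, precisely the room in $U\setminus\bar H$ that the Stone--Weierstrass obstruction shows is indispensable. (A non-explicit variant, replacing $\chi(k-l)$ by a random $\pm1$ pattern and invoking the $O(p^{3/2})$ cut norm of random sign matrices, would give the same conclusion and may be conceptually cleaner, but I prefer the character construction since it makes the cancellation completely explicit.)
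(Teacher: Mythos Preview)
Your construction is correct. The Legendre--symbol circulant $M_{kl}=\chi(k-l)$ has operator norm $\sqrt{p}$ by the Gauss--sum evaluation, so the rectangle discrepancy is at most $p^{-1/2}$; and $\sum_{k,l}\chi(k-l)^2=p(p-1)$ gives $\int h\,dD_n=1-1/p\to 1$ against your test function. The embedding of the $n$-th grid into a uniformly discrete block and the bump-function (or Kirszbraun) extension to a single bounded Lipschitz $h$ on $\mathbb{R}^2$ is routine, since the values are in $[-1,1]$ and the grid spacing is bounded below.

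The paper follows exactly the same architecture---build a signed kernel with small cut norm but large entrywise $\ell_1$ mass, then witness the failure of \textbf{AI-1} by a uniformly continuous $h$ that copies the kernel on the lattice---but with a different matrix. In place of your $p\times p$ Paley circulant it uses (after a doubling trick to force uniform marginals) the $n\times 2^n$ array $\mathrm{sign}(i,j)=2\cdot(\text{$i$-th binary digit of }j)-1$, whose rows are all $\pm1$ strings of length $n$; the cut-norm bound $\bigl|\sum_{i,j}a_ib_j\,\mathrm{sign}(i,j)\bigr|\le 2^n\sqrt n$ is obtained by a convexity reduction to $a_i,b_j\in\{\pm1\}$ followed by $\mathbb{E}|\epsilon_1+\cdots+\epsilon_n|\le\sqrt n$. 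Because the bit function $\chi(i,j)$ is defined consistently for all $n$, the paper can use a single fixed $h$ on the integer lattice without shifting blocks, whereas your characters change with $p_n$ and so require disjoint supports. Your spectral argument is sharper and more symmetric in $X_n,Y_n$; the paper's is more elementary (no number theory beyond binary expansion). Your parenthetical about random $\pm1$ matrices is exactly the third classical instance of the same low-discrepancy phenomenon.
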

As {\bf AI-0} always follows from  {\bf AI-3} we immediately deduce 
\begin{corollary}\label{contr2}
Property {\bf AI-0} does not imply {\bf AI-1}.
\end{corollary}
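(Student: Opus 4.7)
By Proposition 4, if the marginals $(P_{X_n})$ and $(P_{Y_n})$ are tight then \textbf{AI-3}$\Rightarrow$\textbf{AI-0}$\Leftrightarrow$\textbf{AI-1}. Consequently, the counterexample must have \emph{non-tight} marginals---probability mass must escape to infinity in at least one coordinate. The plan is therefore to construct sequences $(X_n),(Y_n)$ whose distributions drift out of every compact set, coupled by a joint law that mixes a dominant ``almost-independent'' piece with a small correlated piece, calibrated so that rectangle probabilities factorize uniformly in the limit (\textbf{AI-3}) while a carefully chosen bounded uniformly continuous function still detects a persistent bias (failure of \textbf{AI-1}).

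Concretely, I would proceed as follows. \emph{Step 1.} Define marginals $P_{X_n}, P_{Y_n}$ supported on unbounded subsets of $\mathbb{R}$ (for instance, growing discrete lattices scaled to escape to infinity, or unions of shifted unit intervals), and specify the joint as a convex combination $P_{(X_n,Y_n)} = (1-\epsilon_n)\,\mu^{\mathrm{ind}}_n + \epsilon_n\,\mu^{\mathrm{corr}}_n$, with $\mu^{\mathrm{ind}}_n = P_{X_n}\times P_{Y_n}$ and $\mu^{\mathrm{corr}}_n$ a ``diagonal-type'' coupling. \emph{Step 2.} Verify \textbf{AI-3}: bound
\[
\sup_{A,B\in\mathcal{B}(\mathbb{R})} \bigl|P_{(X_n,Y_n)}(A\times B)-P_{X_n}(A)P_{Y_n}(B)\bigr| \;\le\; \epsilon_n\cdot \text{(uniform constant)} \;\to\;0,
\]
the crucial point being that the supremum is over \emph{all} Borel rectangles, not just bounded ones; case analysis based on how $A,B$ intersect the support of the marginals reduces the sup to the weight $\epsilon_n$ of the correlated piece. \emph{Step 3.} Choose a test function of the form $h(x,y)=\phi(x-y)$ with $\phi$ bounded, uniformly continuous, and such that $h$ does not lie in the $\|\cdot\|_\infty$-closure of finite sums $\sum_i f_i(x)g_i(y)$ (e.g.\ $\phi(t)=(1+t^2)^{-1}$); then compute $\mathbb{E} h(X_n,Y_n)-\int\int h(x,y)\,dP_{X_n}(x)\,dP_{Y_n}(y)$. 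Along $\mu^{\mathrm{corr}}_n$ the function $h$ is pinned near its diagonal value $\phi(0)$, whereas the product measure averages $\phi(x-y)$ over widely spread marginals and contributes essentially $0$; the arrangement is tuned so the difference stays bounded below.

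The main obstacle is exactly this balancing act. \textbf{AI-3} is a \emph{uniform} control over all rectangles, and naive constructions (where the correlated component has fixed weight, or where $Y_n=X_n+c$ on a set of positive probability) produce a discrepancy of order $1$ on half-interval rectangles and so fail \textbf{AI-3} outright; on the other hand, shrinking $\epsilon_n$ too fast kills the $\phi(0)$-contribution and restores \textbf{AI-1}. The construction escapes this squeeze only by exploiting the noncompactness of $\mathbb{R}^2$: as the excerpt notes just before Proposition~9, on compact product spaces Stone--Weierstrass would force products $f\otimes g$ to span a dense subspace of $C_b$ and \textbf{AI-0} would propagate to \textbf{AI-1}, but on $\mathbb{R}^2$ bounded uniformly continuous functions need not be uniformly approximable by finite sums $\sum_i f_i(x)g_i(y)$. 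Thus one hides the correlated component at infinity so that each individual rectangle only sees a vanishing fraction of its mass, yet the translation-invariant test function $h(x,y)=\phi(x-y)$ still detects it; making this work uniformly for \emph{all} Borel rectangles is the core technical content.
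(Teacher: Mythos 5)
There is a genuine gap, and it sits exactly where you locate the difficulty. Your Step 1 posits a joint law of the form $P_{(X_n,Y_n)}=(1-\epsilon_n)\,\mu^{\mathrm{ind}}_n+\epsilon_n\,\mu^{\mathrm{corr}}_n$ with $\mu^{\mathrm{corr}}_n$ a positive (``diagonal-type'') measure, and your Step 2 bound forces $\epsilon_n\to 0$. But then $\|P_{(X_n,Y_n)}-P_{X_n}\times P_{Y_n}\|_{var}=O(\epsilon_n)\to 0$, i.e.\ \textbf{AI-4} holds, hence \textbf{AI-1} holds, and Step 3 cannot detect anything: the contribution of $h$ along $\mu^{\mathrm{corr}}_n$ is $O(\epsilon_n)$. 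Conversely, keeping $\epsilon_n$ bounded away from $0$ with a positive correlated piece always produces a rectangle with discrepancy of order $\epsilon_n$. You name this squeeze yourself, but the assertion that ``hiding the correlated component at infinity'' escapes it is not a mechanism: for a perturbation that is a \emph{positive} measure of non-vanishing weight, some Borel rectangle always sees a non-vanishing fraction of it, no matter where its support sits. Non-compactness alone does not resolve the dichotomy; your plan as stated provably cannot produce the example.

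The missing idea is that the perturbation $\mu_n=P_{(X_n,Y_n)}-P_{X_n}\times P_{Y_n}$ must be a \emph{signed} measure whose total variation does \emph{not} vanish (in the paper it equals $1$ for all $n$), but whose signs are arranged so that massive cancellation occurs inside every rectangle and, more generally, against every product $f(x)g(y)$. Concretely, the paper places atoms on the lattice $\{0,\dots,2n-1\}\times\{0,\dots,2^n-1\}$ with signs given by the binary digits of $j$ (so the sign matrix lists all $2^n$ strings in $\{\pm 1\}^n$); a convexity reduction to $\pm 1$ coefficients plus a Khintchine-type bound ($\mathbb{E}|\epsilon_1+\dots+\epsilon_n|\le\sqrt{n}$) shows every sum $\sum_{i,j}a_ib_j\,\mathrm{sign}(i,j)$ is at most $2^n\sqrt{n}$, which yields the \emph{uniform} bound $O(1/\sqrt n)$ over all bounded measurable $f,g$ --- this gives \textbf{AI-3}, hence \textbf{AI-0}. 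The test function is then not translation-invariant like your $\phi(x-y)$ (such an $h$ could not correlate with this pattern); it is a sum of bumps $u(x-i,y-j)$ weighted by the same binary digits, so that $\int h\,d\mu_n=1/4$ for all $n$. Non-compactness enters only in that the sign pattern is spread over a growing unbounded lattice, which is what lets this exponentially complex $h$ remain bounded and uniformly continuous. Your diagnosis of the necessary features (non-tight marginals, failure of Stone--Weierstrass density of $\sum_i f_i\otimes g_i$) is correct, but without the signed, pseudo-random cancellation structure the construction does not exist, so the corollary is not established.
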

As a byproduct we get also
\begin{corollary}\label{contr3}
For non compact spaces  $E_1,\;E_2$ (even if they are locally compact), it is possible that the
equality $U_1\times U_2 = U$ does not hold.
\end{corollary}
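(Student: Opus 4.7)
The plan is to exhibit non-tight sequences $(X_n),(Y_n)$ whose joint law is a small but carefully chosen signed perturbation of the product of marginals on a sparse grid escaping to $+\infty$. Non-tightness will be essential, because otherwise Proposition~4 would force AI-0 (which AI-3 implies) to coincide with AI-1. The decoupling of AI-3 from AI-1 will come from Walsh--Hadamard sign patterns, whose rectangular discrepancy is much smaller than their total variation, so that a single fixed bounded uniformly continuous test function can witness a macroscopic signal that no rectangle resolves.

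Concretely, I will fix a widely separated sequence $a_1<a_2<\dots$ in $\mathbb{R}$ with $a_k\to\infty$, truncation sizes $N_n=2^{m_n}-1\to\infty$, and a small $\varepsilon\in(0,\tfrac12)$. With $s_{kl}=(-1)^{\langle k,l\rangle_2}$ the Walsh--Hadamard sign pattern ($\langle\cdot,\cdot\rangle_2$ the bitwise dot product on binary expansions of $k,l\ge 1$) and the rank-one correction $\tilde s_{kl}=s_{kl}+1/N_n$, I set
\[
P_{(X_n,Y_n)}(\{a_k\}\times\{a_l\})=N_n^{-2}(1+\varepsilon\tilde s_{kl})\qquad(1\le k,l\le N_n).
\]
The correction kills the row and column sums of $\tilde s$, so $X_n$ and $Y_n$ are uniform on $\{a_1,\dots,a_{N_n}\}$, and neither sequence is tight. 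To establish AI-3 I will write $P_{(X_n,Y_n)}(A\times B)-P_{X_n}(A)P_{Y_n}(B)=(\varepsilon/N_n^{2})\sum_{(k,l)\in I\times J}\tilde s_{kl}$ for the grid traces $I,J\subset\{1,\dots,N_n\}$ of $A$ and $B$, and bound $|1_I^{T}S\,1_J|\le\|S\|_{\mathrm{op}}\sqrt{|I||J|}=O(N_n^{3/2})$ using the Walsh--Hadamard operator-norm estimate $\|S\|_{\mathrm{op}}=O(\sqrt{N_n})$. Combined with the trivial bound $|I||J|/N_n\le N_n$ controlling the rank-one correction, the rectangular discrepancy becomes $O(\varepsilon/\sqrt{N_n})\to0$, which is AI-3.

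For the failure of AI-1 I will take once and for all a bounded uniformly continuous $h:\mathbb{R}^{2}\to[-1,1]$ with $h(a_k,a_l)=s_{kl}$ for every $k,l\ge 1$, e.g.\ $h(x,y)=\sum_{k,l}s_{kl}\phi(x-a_k)\phi(y-a_l)$ where $\phi$ is a fixed continuous bump of unit height supported in $[-1,1]$; the wide spacing of the $a_k$ keeps the summands pairwise disjointly supported, so $h$ is uniformly continuous with no dependence on $n$. Using $\sum s_{kl}^{2}=N_n^{2}$ and $\sum_{k,l}s_{kl}=-N_n$ (each row of $S$ summing to $-1$), a short computation gives
\[
\int h\,d\bigl(P_{(X_n,Y_n)}-P_{X_n}\times P_{Y_n}\bigr)=\frac{\varepsilon}{N_n^{2}}\sum_{k,l}s_{kl}\tilde s_{kl}=\varepsilon\bigl(1-N_n^{-2}\bigr)\longrightarrow\varepsilon>0,
\]
refuting AI-1. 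The hard part will be coordinating the two requirements through a \emph{single} $h$ valid for all $n$; the nested Walsh--Hadamard structure delivers exactly this, since the sign pattern $s_{kl}$ persists under enlargement of the index set, while its $\sqrt{N_n}$ operator-norm bound creates the discrepancy-vs-total-variation gap that lets AI-3 decouple from AI-1 once tightness is abandoned.
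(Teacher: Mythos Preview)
Your construction is correct and follows the same blueprint as the paper's: a discrete joint law on a grid escaping to infinity whose deviation from the product of its (uniform) marginals is a scaled $\pm 1$ sign matrix with rectangular discrepancy $O(\sqrt{N})$ but total mass $\Theta(N)$, together with a fixed bump-sum $h\in U$ reading off those signs. The paper realises the sign matrix as the $n\times 2^n$ Rademacher/bit array $\sign(i,j)=2\chi(i,j)-1$ (the $i$-th binary digit of $j$), forces uniform marginals by appending a complementary $n\times 2^n$ block, and proves the rectangular bound $\bigl|\sum a_ib_j\sign(i,j)\bigr|\le 2^n\sqrt{n}$ via a convexity reduction to $\pm1$ coefficients followed by Jensen on Rademacher sums. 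Your version is more symmetric: the $(2^{m}-1)\times(2^{m}-1)$ Walsh--Hadamard submatrix has exact row and column sums $-1$, so a rank-one shift $+1/N_n$ replaces the paper's block-doubling, and the discrepancy bound is immediate from $\|S\|_{\mathrm{op}}=\sqrt{N_n+1}$ rather than an ad hoc lemma. The trade-off is that the paper's bit matrix makes the ``single $h$ for all $n$'' point trivially visible (the bits of $j$ do not change as $n$ grows), whereas you need the observation that $s_{kl}=(-1)^{\langle k,l\rangle_2}$ is intrinsic to $k,l$ and independent of the truncation level $N_n$; you state this, and it is correct. Either route yields the same $O(N_n^{-1/2})$ rate for {\bf AI-3} and a fixed positive limit for $\int h\,d\mu_n$, so the conclusions coincide.
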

\begin{proof} Denote as $B_1(E_1), B_1(E_2)$ the sets of bounded real-valued measurable functions the absolute values of which do not exceed $1$ from $E_1$ and $E_2$ respectively.
It is easy to see that {\bf AI-3} is equivalent to the following property
\begin{equation}\label{AI-3a}
\sup_{f\in B_1(E_1),\;g\in B_1(E_2)}|\mathbb{E}f(X_n)g(Y_n)-\mathbb{E}f(X_n)\mathbb{E}g(Y_n)| \to 0,
\end{equation}
 when $n \to +\infty$.
It is sufficient to show that (\ref{AI-3a}) does not imply {\bf AI-1}.

Each $j \in \mathbb{N}$ admits a binary coding, i.e. $j=l_n...l_0$, where
$l_k \in \{0,1\},\,k=0,...,n,\,l_n=1,\,n\in \mathbb{N}.$ In other words, $j=1\cdot 2^n+l_{n-1}\cdot 2^{n-1}+...+l_0\cdot 2^0$.
Introduce the function 
\begin{equation*}
\chi(i,j) =
\begin{cases}
l_i & \text{ for } i \leq n\\
0 & \text{ for } i > n
\end{cases} 
\end{equation*}
(The function $\chi$ is defined on pairs of non negative integers). Also introduce the function $\sign(i,j)=2\chi(i,j)-1$.

Let $u:\mathbb{R}^2\to\mathbb{R}$, $u(x,y)=\max\{0,\;1-4|x|-4|y|\}$.
Remark that its support lies in the square $|x|\leq \frac{1}{4},|y|\leq \frac{1}{4}.$
Consider the function 
$$
h(x,y)=\sum\limits_{i=0}^{\infty}\sum\limits_{j=0}^{\infty}\chi(i,j)u(x-i,y-j).
$$ It is easy to see that $h$ is well-defined (when $x$, $y$ are fixed, no more than one of $u(x-i,y-j)$ is not equal to 0), bounded and uniformly continuous. 

By $\delta_{a}$ we will denote a delta-measure concentrated at the point $a \in \mathbb{R}^2$: $\delta_{a}(A)=1$ if $a \in A$, $\delta_{a}(A)=0$ if $a \notin A$. Now remark that there exist $X_n,Y_n$ such that 
$$
n 2^n P_{(X_n,Y_n)}=\sum\limits_{i=0}^{n-1}\sum\limits_{j=0}^{2^n-1}\chi(i,j)\delta_{(i,j)}+\sum\limits_{i=n}^{2n-1}\sum\limits_{j=0}^{2^n-1}(1-\chi(i-n,j))\delta_{(i,j)}.
$$

Indeed, $ \forall i: 0\leq i \leq n-1,$ 
\begin{equation}\label{*}
 \sum\limits_{j=0}^{2^n-1}\chi(i,j)=\sum\limits_{j=0}^{2^n-1}(1-\chi(i,j))=2^{n-1},
\end{equation} 
and then the total variation of the measure 
 $$
 \nu=\sum\limits_{i=0}^{n-1}\sum\limits_{j=0}^{2^n-1}\chi(i,j)\delta_{(i,j)}+\sum\limits_{i=n}^{2n-1}\sum\limits_{j=0}^{2^n-1}(1-\chi(i-n,j))\delta_{(i,j)} 
$$
equals $2^{n-1}\cdot 2n=n2^n$.

 Below an illustration for $n=3$ is given (the number in the $i$-th column and in the $j$-th row from below is the weight of the measure $P_{(X_n,Y_n)}$ at the point $(i,j)$).

\[
\left(%
\begin{array}{cccccc}
1/24 & 1/24 & 1/24 & 0 & 0 & 0\\
0 & 1/24 & 1/24 & 1/24 & 0 & 0\\
1/24 & 0 & 1/24 & 0 & 1/24 & 0\\
0 & 0 & 1/24 & 1/24 & 1/24 & 0\\
1/24 & 1/24 & 0 & 0 & 0 & 1/24\\
0 & 1/24 & 0 & 1/24 & 0 & 1/24\\
1/24 & 0 & 0 & 0 & 1/24 & 1/24\\
0 & 0 & 0 & 1/24 & 1/24 & 1/24\\
\end{array}%
\right)
\]

 It is easy to see from (\ref{*}) that for all $i,\;0\leq i \leq 2n-1,\;$\\ $P_{X_n}(\{i\})=\frac{1}{n2^n}\cdot 2 ^{n-1}=\frac{1}{2n}$. As
 for all $i,j,\;0\leq i \leq n-1,\,\;0 \leq j \leq 2^n-1,$ 
 $$
 P_{(X_n,Y_n)}\{(i,j)\}+P_{(X_n,Y_n)}\{(i+n,j)\}=\frac{\chi(i,j)+(1-\chi(i,j))}{n2^n}=\frac{1}{n2^n},
 $$ 
 we have $P_{Y_n}(\{j\})=\frac{1}{n2^n}\cdot n = \frac{1}{2^n}$.
  
 Consider $\mu_n = P_{(X_n,Y_n)}-P_{X_n}\times P_{Y_n}$, then
 \begin{equation*}
   \mu_n\{(i,j)\} = \frac{\chi(i,j)}{n2^n}-\frac{1}{2n}\cdot \frac{1}{2^n}=\frac{\sign(i,j)}{n2^{n+1}}, 
 \end{equation*}
when $0 \leq i \leq n-1,\; 0 \leq j \leq 2^n-1,$

and
\begin{equation*}
  \mu_n\{(i,j)\} = \frac{1-\chi(i-n,j)}{n2^n}-\frac{1}{2n}\cdot \frac{1}{2^n}=\frac{-\sign(i-n,j)}{n2^{n+1}},
 \end{equation*}
when $ n \leq i \leq 2n-1,\; 0 \leq j \leq 2^n-1.$

 Remark that 
 $$
 \int h(x,y) d\mu_n(x,y) = \sum\limits_{i=0}^{2n-1}\sum\limits_{j=0}^{2^n-1} h(i,j)\mu_n\{(i,j)\}=\sum\limits_{i=0}^{n-1}\sum\limits_{j=0}^{2^n-1} h(i,j)\mu_n\{(i,j)\},
 $$ 
 because when $i \geq n,\; j \leq 2^n-1,\;$ we have $h(i,j)=0$.
 
 We also have
 $$
 \sum\limits_{i=0}^{n-1}\sum\limits_{j=0}^{2^n-1} h(i,j)\mu_n\{(i,j)\}=\sum\limits_{i=0}^{n-1}\sum\limits_{j=0}^{2^n-1}
 \frac{\chi(i,j)\sign(i,j)}{n2^{n+1}}=\sum\limits_{i=0}^{n-1}\frac{2^{n-1}}{n2^{n+1}}=1/4
 $$
from (\ref{*}) and because when $\chi(i,j)=1$ we have $\sign(i,j)=1$. 
 
 Hence, $\int h(x,y) d\mu_n\{(x,y)\} \nrightarrow 0$ when $n \to +\infty$, and $(X_n,Y_n)$ do not satisfy condition \textbf{AI-1}.
 
 We will prove that $(X_n,Y_n)$ satisfy (\ref{AI-3a}). At first we will state the following:
 \begin{lemma}
 Suppose $a_0,...,a_{n-1},b_0,...,b_{2^n-1}$ are real numbers with absolute value not exceeding one. Then 
 $$
 \left|\sum\limits_{i=0}^{n-1}\sum\limits_{j=0}^{2^n-1}a_i b_j \sign(i,j)\right| \leq  2^n \sqrt{n}.
 $$
 \end{lemma}
 \begin{proof}
 Denote $\psi(a_0,...,a_{n-1},b_0,...,b_{2^n-1}) = \sum\limits_{i=0}^{n-1}\sum\limits_{j=0}^{2^n-1}a_i b_j \sign(i,j)$.
 If we consider $\psi$ as a function of one of the variables $a_0,...,a_{n-1},b_0,...,b_{2^n-1}$, fixing the remaining variables, we will get the sum of a constant and a linear function. Hence, $|\psi|$ is convex in  each variable. Then
 \begin{align} \label{**} 
 |\psi(...,a_{i-1},& \displaystyle{a_i,a_{i+1},...)|}  \nonumber \\
& \displaystyle{\leq \max\{|\psi(...,a_{i-1},1,a_{i+1},...)|,\;
|\psi(...,a_{i-1},-1,a_{i+1},...)|\},}\\ \nonumber 
 \end{align} 
and
  \begin{align} \label{***}
 |\psi(...,b_{j-1},& b_j,b_{j+1},...)| \nonumber\\
& \displaystyle{\leq \max\{|\psi(...,b_{j-1},1,b_{j+1},...)|,\;
|\psi(...,b_{j-1},-1,b_{j+1},...)|.}\\ \nonumber 
 \end{align}  
 Applying the inequalities (\ref{**}), (\ref{***}) consequentially for each variable, we get that $|\psi|$ reaches its maximum at some values of $a_0,...,a_{n-1},b_0,...,b_{2^n-1}$ such that for each $i$, $j$ $|a_i|=1$, $|b_j|=1$. 
 
 Remark that the number $\psi(a_0,...,a_{n-1},b_0,...,b_{2^n-1})$ can be constructed in the following way: consider the matrix $M$ of $n$ columns and $2^n$ rows, where $m_{ji}=\sign(i,j)$, then multiply the $i$-th column by $a_i$, multiply the $j$-th row by $b_j$ (first multiply the columns, second multiply the rows). The matrix $M$ will be transformed to $\tilde{M}$, then $\psi(a_0,...,a_{n-1},b_0,...,b_{2^n-1})$ is the sum of the elements of $\tilde{M}$. 
 
 The rows of $M$ are all possible strings of $n$ numbers 1 and (-1). It is shown above that we can suppose $a_i = 1$ or $a_i=-1$.
 After each multiplication by $a_i$ the matrix $M$ will still consist of numbers 1 and -1; distinct rows will remain distinct, so, after the multiplication by all $a_i$ we will get the matrix $\tilde{\tilde{M}}$, consisting of $2^n$ distinct rows of numbers 1 and -1. But the rows have length $n$, hence, each  string of length $n$ of numbers 1 and -1 will be represented exactly once. 
 
 Consider a row of the matrix $\tilde{\tilde{M}}$. Suppose there are $t$ numbers 1 in it. Then after the multiplication by $b_j$ the sum of the numbers in this row will not exceed $|t-(n-t)|$ in absolute value (we suppose that $\forall j$ $b_j=1$ or $b_j=-1$). Hence, 
\begin{equation}\label{****}
 |\psi(a_0,...,a_{n-1},b_0,...,b_{2^n-1})|\; \leq \sum\limits_{c_1,...,c_n \in \{-1,1 \} } |c_1+...+c_n|.
 \end{equation}
 Take $n$ i.i.d random variables $\epsilon_1,...,\epsilon_n$ such that\\
  $\mathbb{P}\{\epsilon_1 = 1\}=\mathbb{P}\{\epsilon_1=-1\}=1/2$. Then the right part of (\ref{****}) is equal to 
  $$
  2^n \mathbb{E}|\epsilon_1+...+\epsilon_n|\leq 2^n \sqrt{\mathbb{E}{|\epsilon_1+...+\epsilon_n|^2}}=2^n \sqrt{Var(\epsilon_1+...+\epsilon_n)}=2^n\sqrt{n},
  $$ 
  due to Jensen's inequality. 
 \end{proof}
 
 Now take any $f\in B_1(E_1)$, $g \in B_1(E_2)$ and remark that 
 \begin{align*}
 \int f(x)& g(y) d\mu_n(x,y)\\
 & =  \sum\limits_{i=0}^{n-1}\sum\limits_{j=0}^{2^n-1} \frac{\sign(i,j)}{n2^{n+1}} f(i)g(j) + \sum\limits_{i=n}^{2n-1}\sum\limits_{j=0}^{2^n-1} \frac{-\sign(i-n,j)}{n2^{n+1}} f(i)g(j).
 \end{align*}
  By lemma 1, substituting $a_i = f(i)$, $b_j=g(j)$, we have that the first sum does not exceed $\frac{2^n \sqrt{n}}{n2^{n+1}}=\frac{1}{2\sqrt{n}}$ in absolute value. By lemma 1, substituting $a_i = -f(i+n)$, $b_j = g(j)$, we have that the second sum also does not exceed $\frac{1}{2\sqrt{n}}$ in absolute value. 
 Finally, for all $n \in \mathbb{N},\, n\geq 1,$
 $$
 \int f(x)g(y) d\mu_n(x,y) \leq \frac{1}{\sqrt{n}} \to 0,
 $$ 
 when $n \to +\infty$. Hence, $(X_n,Y_n)$ satisfy condition (\ref{AI-3a}). 
 \end{proof}
\subsection{AI-1 does not imply AI-2}
\begin{proposition}\label{AI-1,2}
Let $E_1=\mathbb{R}$, $E_2=\mathbb{R}$. There exist sequences $(X_n)$ and $(Y_n)$
of random variables satisfying {\bf AI-1} but not {\bf AI-2}.
\end{proposition}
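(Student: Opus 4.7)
The strategy is to place all the discrepancy between $P_{(X_n,Y_n)}$ and $P_{X_n}\times P_{Y_n}$ on a configuration of points that shrinks to a single limit point. Because uniformly continuous test functions become essentially constant on a set of vanishing diameter, \textbf{AI-1} will be invisible to the discrepancy; but an indicator of a fixed singleton still separates the points and will expose the dependence for \textbf{AI-2}.

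Step 1: Construction. Define $(X_n,Y_n)$ by
\[
P_{(X_n,Y_n)} \;=\; \tfrac{1}{2}\,\delta_{(0,0)} + \tfrac{1}{2}\,\delta_{(1/n,\,1/n)},
\]
so that $X_n=Y_n$ almost surely, each taking the value $0$ or $1/n$ with probability $\tfrac12$. Then $P_{X_n}=P_{Y_n}=\tfrac12(\delta_0+\delta_{1/n})$ and $P_{X_n}\times P_{Y_n}$ puts mass $\tfrac14$ on each of the four corners $(0,0),(0,1/n),(1/n,0),(1/n,1/n)$.

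Step 2: Falsify \textbf{AI-2}. Choose the fixed Borel set $A=B=\{0\}$. Then $P_{(X_n,Y_n)}(A\times B)=\tfrac12$ whereas $P_{X_n}(A)\,P_{Y_n}(B)=\tfrac14$, so the difference equals $\tfrac14$ for every $n$.

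Step 3: Verify \textbf{AI-1}. For any bounded uniformly continuous $h:\mathbb{R}^2\to\mathbb{R}$ with modulus of continuity $\omega_h$, a one-line computation gives
\[
\int h\,dP_{(X_n,Y_n)} \;-\; \int h\,d(P_{X_n}\times P_{Y_n}) \;=\; \tfrac14\bigl[h(0,0)-h(0,1/n)+h(1/n,1/n)-h(1/n,0)\bigr],
\]
whose absolute value is at most $\tfrac12\,\omega_h(1/n)\to 0$.

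\textbf{Main obstacle.} The construction is deliberately minimal, so there is no serious technical obstacle; the substance lies in the conceptual asymmetry being exploited. \textbf{AI-1} only senses the topological/metric structure of $\mathbb{R}^2$ through uniform continuity, so it cannot see discrepancies at scales smaller than an arbitrarily prescribed modulus; \textbf{AI-2} uses the entire Borel $\sigma$-algebra, which separates any two distinct points no matter how close. Placing the dependence on a shrinking $2\times 2$ grid exploits exactly this gap, and one could equally well push the configuration to infinity (e.g.\ centered at $(n,n)$ with spacing $1/n$ and $A=B=\mathbb{N}$) for a non-tight variant.
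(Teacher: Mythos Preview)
Your proof is correct. Both your argument and the paper's rest on the same mechanism: place the dependence on a configuration of atoms whose mutual distances shrink to zero, so that uniformly continuous test functions cannot resolve the discrepancy while a fixed singleton indicator still can. The paper constructs a four-atom example, $X_n=X+Y/n$, $Y_n=Y$ with $X,Y$ independent fair Bernoulli, and then verifies \textbf{AI-1} by expanding $P_{(X_n,Y_n)}-P_{X_n}\times P_{Y_n}$ as an explicit signed sum of eight delta measures whose supports coalesce pairwise. Your two-atom ``diagonal'' construction $X_n=Y_n\in\{0,1/n\}$ is strictly simpler: the signed difference has only four atoms, the AI-1 computation collapses to a single line with a clean modulus-of-continuity bound, and the failure of \textbf{AI-2} is witnessed by the same fixed singleton $\{0\}$ for all $n$. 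Conceptually the two examples are equivalent; practically, yours is the more economical illustration of the phenomenon.
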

\begin{proof}
 Consider two independent random variables $X$, $Y$ such that $\mathbb{P}\{X=1\}=\mathbb{P}\{X=0\}=\frac{1}{2}$,\\ $\mathbb{P}\{Y=1\}=\mathbb{P}\{Y=0\}=\frac{1}{2}$.
Now suppose $X_n=X+\frac{Y}{n}$, $Y_n=Y$. 

Let us at first ensure that $X_n$, $Y_n$ do not satisfy condition {\bf AI-2}. Substitute $A=\{1\},B=\{1\}$. Then when $n \geq 2$ it is easy to check that $\mathbb{P}\{X_n \in A, Y_n \in B\}=0$, $\mathbb{P}\{X_n \in A\}\mathbb{P}\{Y_n \in B\}=\frac{1}{2}\cdot\frac{1}{4}=\frac{1}{8}$. So $\lim\limits_{n \to +\infty} |\mathbb{P}\{X_n \in A\}\mathbb{P}\{Y_n \in B\}-\mathbb{P}\{X_n \in A, Y_n \in B\}| \neq 0$, which is a contradiction with {\bf AI-2}.

We will check that, nevertheless, $X_n$, $Y_n$ satisfy {\bf AI-1}. Recall the following simple result:
\begin{statement}
Let $\{x_n\},\{y_n\}$ be two sequences of points in the metric space $(M,d)$, then for all $f:M\to \mathbb{R}^1$, uniformly continuous and bounded, $\int f d(\delta_{x_n})-\int f d(\delta_{y_n}) \to 0$ when $n \to +\infty$, if $d(x_n,y_n)\to 0$ when $n \to +\infty$. Here $\delta_{a}$ is a delta-measure concentrated at the point $a$.
\end{statement}

Remark that 
\begin{align*}
P_{(X_n,Y_n)}& =\delta_{(0,0)}\mathbb{P}\{X=0,Y=0\}+\delta_{(\frac{1}{n},1)}\mathbb{P}\{X=0,Y=1\}\\
&+\delta_{(1,0)}\mathbb{P}\{X=1,Y=0\}+\delta_{(1+\frac{1}{n},1)}\mathbb{P}\{X=1,Y=1\}\\
&=\frac{\delta_{(0,0)}+\delta_{(\frac{1}{n},1)}+\delta_{(1,0)}+\delta_{(1+\frac{1}{n},1)}}{4}. 
\end{align*}
Moreover, 
\begin{align*}
P_{X_n} \times P_{Y_n}& = \left(\frac{\delta_{0}+\delta_{\frac{1}{n}}+\delta_{1}+\delta_{1+\frac{1}{n}}}{4}\right)\times\left(\frac{\delta_{0}+\delta_{1}}{2}\right)\\
&=\frac{\delta_{(1,1)}+\delta_{(\frac{n+1}{n},1)}}{8}
+\frac{\delta_{(0,0)}+\delta_{(\frac{1}{n},0)}}{8}\\
&+\frac{\delta_{(0,1)}+\delta_{(\frac{1}{n},1)}}{8}
+\frac{\delta_{(\frac{n+1}{n},0)}+\delta_{(1,0)}}{8}.
\end{align*}
Finally for all bounded and uniformly continuous functions $h$ on $E_1 \times E_2$ we get 
\begin{align*}
\int h d(P_{(X_n,Y_n)}-& P_{X_n}\times P_{Y_n}) \\
& = \frac{1}{8} \int h\, d(\delta_{(\frac{n+1}{n},1)}-\delta_{(1,1)})
+\frac{1}{8} \int h\, d(\delta_{(0,0)}-\delta_{(\frac{1}{n},0)})\\
&+\frac{1}{8} \int h\, d(\delta_{(\frac{1}{n},1)}-\delta_{(0,1)})
+\frac{1}{8} \int h\, d(\delta_{(1,0)}-\delta_{(\frac{n+1}{n},0)}),
\end{align*}
 which goes to $0$ when $n \to +\infty$ (from the fact given above).
\end{proof}

\section{Concluding remarks}
{\bf I.}
It is clear that conditions {\bf AI-0 - AI-4} can naturally be modified for mutual asymptotic
independence of several (more than 2) random sequences.
At the same time analogs of all main given results
will remain true.
\vspace{5pt}

\noindent
{\bf II.}
Below we formulate some open questions and suggest some directions of research on asymptotic independence.

1. It is interesting to consider conditions for AI of the form
$$
\int_{E_1}fdP_{X_n} \int_{E_2}gdP_{Y_n} - \int_{E_1\times E_2}(f\times g)dP_{(X_n,Y_n)}
\rightarrow 0
$$
for all $f,\,g$ belonging to some classes ${\cal F}_1, \;{\cal F}_2$ of functions.
\vspace{5pt}

\noindent
2.  Find sufficient conditions for AI of the following type:

If $(f(X_n)),\;(g(Y_n))$ are AI for all $f,\,g$ belonging to some classes ${\cal F}_1, \;{\cal F}_2$ of functions, then $(X_n),\;(Y_n)$ are AI.
\vspace{5pt}

\noindent
3. One can also study conditions for AI of the form
$$
\int_{E_1}fdP_{X_n} \int_{E_2}gdP_{Y_n} - \int_{E_1\times E_2}(f\times g)dP_{(X_n,Y_n)}
\rightarrow 0
$$
\textbf{uniformly} for all $f,\,g$ belonging to some classes ${\cal F}_1, \;{\cal F}_2$ of functions. For example the following question is interesting:

For a metric space $M$ we denote by $BL_1(M)$ the set of real-valued functions $f$ on $M$ such that for all $x \in M, \; |f(x)|\leq 1,$ and for all $x,y \in M, \; |f(x)-f(y)|\leq |x-y|$.
Does {\bf{AI-0}} imply
$$
\sup_{f \in BL_1(E_1),\; g \in BL_1(E_2)} \left|\int\limits_{E_1\times E_2} f(x)g(y)dP_{(X_n,Y_n)}-\int\limits_{E_1} f(x)dP_{X_n}\int\limits_{E_2} g(y)dP_{Y_n}\right| \to 0,
$$
when $n \to +\infty?$

It is known that an analogous fact is true for {\bf{AI-1}}: {\bf{AI-1}} implies
$$
\sup_{h \in BL_1(E_1 \times E_2)} \left| \int\limits_{E_1\times E_2}h(x,y)dP_{(X_n,Y_n)}-\int\limits_{E_1 \times E_2}h(x,y)d(P_{X_n}\times P_{Y_n}) \right| \to 0,
$$
when $ n \to +\infty,$
look at \cite{D-R}, Corollary 6, for example.
\vspace{5pt}

\noindent
4. Does {\bf{AI-0}} imply {\bf{AI-1}} if only one of the sequences $P_{X_n}$ and $P_{Y_n}$ is tight?
\vspace{5pt}

\noindent
5. It is interesting to consider conditions for AI for random elements of 
concrete spaces (such as: space of sequences, space $C[0,1]$, space of configurations and so on...).




\vspace{15pt}

{\bf Acknowledgments}
\vspace{5pt}

1) The authors are grateful to V. Rotar' for useful discussions and
his interest to our work.

2) We would like to thank the anonymous reviewer whose detailed comments allowed us to
improve the presentation of the material.

3) Research is partially supported by «Native towns», a social investment program of PJSC «Gazprom Neft».

\end{document}